\newcommand{\R}{\mathbb R}
\newcommand{\C}{\mathbb C}
\newcommand{\EE}{\mathcal E}
\newcommand{\TT}{\mathcal T}
\newcommand{\OO}{\mathcal O}
\DeclareMathOperator{\Ls}{L}
\DeclareMathOperator{\LLs}{\mathbf L}
\DeclareMathOperator{\Hs}{H}
\DeclareMathOperator{\HHs}{\mathbf H}
\DeclareMathOperator{\WWs}{\mathbf W}
\DeclareMathOperator{\AAs}{\mathbf A}
\DeclareMathOperator{\DDs}{\mathbf D}
\DeclareMathOperator{\KKs}{\mathbf K}
\DeclareMathOperator{\CCs}{\mathbf C}
\DeclareMathOperator{\BBs}{\mathbf B}
\DeclareMathOperator{\UUs}{\mathbf U}
\DeclareMathOperator{\VVs}{\mathbf V}
\DeclareMathOperator{\SSs}{\mathbf S}
\DeclareMathOperator{\XXs}{\mathbf X}
\DeclareMathOperator{\cond}{cond}
\DeclareMathOperator{\RT}{RT}
\DeclareMathOperator{\MMs}{\mathbf M}
\DeclareMathOperator{\GGs}{\mathbf G}
\newcommand{\brac}[1]{\left\lbrace{#1}\right\rbrace}
\newcommand{\paren}[1]{\left({#1}\right)}
\newcommand{\norm}[1]{\left\lVert{#1}\right\rVert}
\newcommand{\abs}[1]{\left\vert{#1}\right\vert}
\newcommand{\inprod}[1]{\left\langle{#1}\right\rangle}
\newcommand{\wt}[1]{\widetilde{#1}}
\newcommand{\wh}[1]{\widehat{#1}}
\newcommand{\ovl}[1]{\overline{#1}}
\newcommand{\transpose}{\mathsf{T}}
\newcommand{\bm}[1]{\boldsymbol{#1}}
\newcommand{\curlt}{\textbf{\textup{curl}}}
\newcommand{\divt}{\textup{div}}
\newcommand{\gradt}{\textbf{grad}}
\newcommand{\veps}{\varepsilon}
\newcommand{\pa}{\partial}
\newcommand{\gm}{\gamma}
\newcommand{\Gm}{\Gamma}
\newcommand{\om}{\omega}
\newcommand{\Om}{\Omega}
\newcommand{\sm}{\sigma}
\newcommand{\vphi}{\varphi}
\newcommand{\fa}{\forall}
\newcommand{\sst}{\subset}
\newcommand{\Ab}{\bm{A}}
\newcommand{\xb}{\bm{x}}
\newcommand{\yb}{\bm{y}}
\newcommand{\zb}{\bm{z}}
\newcommand{\ab}{\bm{a}}
\newcommand{\eb}{\bm{e}}
\newcommand{\vb}{\bm{v}}
\newcommand{\ub}{\bm{u}}
\newcommand{\wb}{\bm{w}}
\newcommand{\fb}{\bm{f}}
\newcommand{\tv}{\textbf{t}}
\newcommand{\bb}{\bm{b}}
\newcommand{\nv}{\textbf{n}}
\newcommand{\zrb}{\bm{0}}
\newcommand{\vphib}{\bm{\varphi}}
\newcommand{\psib}{\bm{\psi}}
\newcommand{\xib}{\bm{\xi}}
\newcommand{\Psib}{\bm{\Psi}}
\newcommand{\dx}{\,\mathrm{d}\xb}
\newcommand{\ds}{\,\mathrm{d}s}
\newcommand{\q}{\quad}
\newcommand{\qq}{\qquad}
\newcommand{\qqq}{\qquad\quad}
\newcommand{\qqqq}{\qquad\qquad}
\newcommand{\qqqqq}{\qquad\qquad\quad}
\newcommand{\qqqqqq}{\qquad\qquad\qquad}
\crefname{hypothesis}{Hypothesis}{Hypotheses}
\title{Boundary element methods for the magnetic field integral equation on polyhedra\thanks{Submitted to the editors DATE.
\funding{This work was funded by the European Research Council (ERC) under the European Union's Horizon 2020 research and innovation programme (Grant agreement No. 101001847).}}}
\author{Van Chien Le\thanks{IDLab, Department of Information Technology, Ghent University - imec, 9000 Ghent, Belgium 
  (\email{vanchien.le@ugent.be}, \email{kristof.cools@ugent.be}).}
\and Kristof Cools\footnotemark[2]}
\begin{document}

\maketitle

\begin{abstract}
This paper provides a rigorous analysis of boundary element methods for the magnetic field integral equation on Lipschitz polyhedra. The magnetic field integral equation is widely used in practical applications to model electromagnetic scattering by a perfectly conducting body. The governing operator is shown to be coercive by means of the electric field integral operator with a purely imaginary wave number. Consequently, the continuous variational problem is uniquely solvable, given that the wave number does not belong to the spectrum of the interior Maxwell's problem. A Petrov-Galerkin discretization scheme is then introduced, employing Raviart-Thomas boundary elements for the solution space and Buffa-Christiansen boundary elements for the test space. Under a mild assumption depending only on the geometrical domain, the corresponding discrete inf-sup condition is proven, implying the unique solvability of the discrete problem. An asymptotically quasi-optimal error estimate for numerical solutions is established, and the convergence rate of the numerical scheme is examined. In addition, the resulting matrix system is shown to be well-conditioned regardless of the mesh refinement. Some numerical results are presented to support the theoretical analysis.
\end{abstract}

\begin{keywords}
magnetic field integral equation, MFIE, electromagnetic scattering, boundary element methods, Petrov-Galerkin boundary element discretization
\end{keywords}

\begin{MSCcodes}
31B10, 65N38
\end{MSCcodes}

\section{Introduction}

The numerical computation of electromagnetic fields scattered by a perfectly conducting body $\Om \sst \R^3$ usually leads to solving a Dirichlet boundary-value problem of the electric wave equation in the exterior unbounded domain $\Om^c := \R^3 \setminus \ovl{\Om}$, supplemented with the Silver-M\"{u}ller radiation condition at infinity. Based on the Stratton-Chu representation formula, it boils down to solving one of two standard boundary integral equations, namely the electric field integral equation (EFIE) and the magnetic field integral equation (MFIE).

With boundary element methods (BEMs) in mind, we assume that the domain $\Om$ is a polyhedron with Lipschitz continuous boundary $\Gm := \pa\Om$ that only consists of flat faces $\Gm_j$, $j = 1, 2, \ldots, N_F$. The extension of the results to Lipschitz curvilinear polyhedra is straightforward. On polyhedral boundary $\Gm$, the definition of Sobolev spaces $\Hs^s(\Gm)$, in particular with $\abs{s} \ge 1$, and differential operators are not obvious as those on smooth surfaces. By localization to each face $\Gm_j$, Buffa and Ciarlet successfully developed functional setting for Maxwell's equations on Lipschitz polyhedra, for which tangential traces of vector fields in $\HHs(\curlt, \Om)$, their trace spaces and relevant surface differential operators were appropriately characterized, see \cite{Buffa2001,BC2001,BC2001b}. All ingredients necessary to develop a standard functional theory, including integration by parts formulas and Hodge decompositions, were also provided. A generalization for general Lipschitz domains was then presented in \cite{BCS2002a}.

Based on the developed functional theory for Maxwell's equations on Lipschitz domains, a natural BEM for the EFIE on polyhedra was studied in \cite{HS2003b}. The continuous and discrete variational problems are coercive with respect to continuous and semi-discrete Hodge decompositions, respectively. The Galerkin discretization of the EFIE employing Raviart-Thomas boundary elements was analyzed, exhibiting the quasi-optimal convergence of numerical solutions. Another discretization scheme for the EFIE on non-smooth Lipschitz domains was proposed in \cite{BCS2002}, using standard boundary elements rather than Raviart-Thomas elements. On polyhedral surfaces, an asymptotically quasi-optimal convergence behaviour was also achieved. BEMs for Maxwell transmission problems on Lipschitz domains were investigated in \cite{BHV+2003}. The Galerkin discretization with Raviart-Thomas spaces was considered, and the results were applied to electromagnetic scattering by perfect electric conductors and by dielectric bodies. In the case of the perfect conductor scattering, only the EFIE was examined, also yielding quasi-optimal numerical solutions. To determine the actual rate of convergence on polyhedral domains, the authors of \cite{BCS2002} and \cite{HS2003b} all took into account the singularities of solution to Maxwell's equations, which were investigated in \cite{CD2000}. More specifically, the convergence rate of BEMs for the EFIE is limited by the regularity parameter $s^\ast$ of the Laplace-Beltrami operator on $\Gm$, which depends on the geometry of $\Gm$ in neighborhoods of vertices only.
                 
In addition to the EFIE, BEMs for combined field integral equations (CFIEs) on Lipschitz domains were investigated in \cite{BH2005,LC2024}. When the wave number belongs to the spectrum of the interior Maxwell's problem, the EFIE and MFIE are not uniquely solvable, resulting in resonant instabilities of Galerkin discretization systems. CFIEs owe their name to an appropriate combination of the Maxwell single and double layer potentials, thereby getting rid of the spurious resonances. In \cite{BH2005}, the authors exploited the coercivity of the EFIE operator by applying a compact regularization to the double layer part. The continuous and discrete variational problems were proven to be uniquely solvable at all wave numbers. A Galerkin boundary element discretization was proposed, whose numerical solutions satisfy an asymptotically quasi-optimal error estimate. The paper \cite{LC2024} introduced a preconditioned CFIE on which the EFIE and MFIE operators are preconditioned by their counterparts of a purely imaginary wave number. By means of a Calder\'{o}n projection formula, the governing CFIE operator was cast into an equivalent formulation that, up to a compact perturbation, is $S_{i\kappa}$-coercive, with $S_{i\kappa}$ the EFIE operator of purely imaginary wave number $i\kappa$. A Galerkin discretization of the mixed variational problem using the lowest-order Raviart-Thomas space and its dual Buffa-Christiansen space was analyzed, producing quasi-optimal numerical solutions. Moreover, the resulting matrix systems were shown to be well-conditioned on fine meshes and stable at resonant frequencies. Nonetheless, the proposed discretization scheme in \cite{LC2024} is not the one typically used by practitioners. In practice, the MFIE part of CFIEs is commonly discretized using Raviart-Thomas boundary elements for the solution space and Buffa-Christiansen elements for the test space. For some examples, the reader is referred to \cite{BCB+2013,MBC+2020,LCA+2023,LCA+2024}.

This paper aims to provide a rigorous analysis of BEMs for the MFIE on Lipschitz polyhedra. Like the EFIE and CFIEs, the MFIE is widely used in practical applications to model the electromagnetic scattering by perfect electric conductors. Despite its utility, however, attention paid to the MFIE is not at the same level with those for the EFIE and CFIEs. This is evident in the relatively limited number of papers devoting to BEMs for the MFIE, with most of them contributed by practitioners, see, e.g., \cite{RUP2001,DW2005,EG2005,CAD+2011,KE2018,KE2023}. On smooth surfaces, the MFIE operator is well-known to be a Fredholm operator of second kind \cite{BH2003}. The existence and uniqueness of a solution to the continuous variational problem of the MFIE has been shown for smooth domains, provided that the wave number is not an eigenvalue of the interior Maxwell's problem, see, e.g., \cite{BH2003}. In the discrete setting, the existence does not necessarily hold true, since the boundary element subspace may not be dual to itself. More precisely, the Galerkin discretization of the identity operator (the principal part of the MFIE operator) using lowest-order Raviart-Thomas elements has been proven to not satisfy the discrete inf-sup condition, see \cite[Section~3.1]{CN2002}. As a result, the coercivity of the corresponding Galerkin discretization of the MFIE does not hold true.

The contribution of this paper is twofold. First, we prove that the continuous variational problem of the MFIE is also uniquely solvable on Lipschitz domains, of course provided that the wave number does not belong to the spectrum of the interior Maxwell's problem. More particularly, we show that the MFIE operator is $S_{i\kappa^\prime}$-coercive up to a compact perturbation, for any $\kappa^\prime > 0$. It is noteworthy that on non-smooth Lipschitz surfaces, the average of exterior and interior traces of the double layer potential operator is not necessarily compact. Secondly, we analyze the mixed discretization scheme for the MFIE introduced in \cite{CAD+2011}, when the boundary $\Gm$ is Lipschitz continuous. The authors of \cite{CAD+2011} proposed a Petrov-Galerkin discretization of the MFIE employing Raviart-Thomas boundary elements for the solution space and Buffa-Christiansen boundary elements for the test space, on which they exploited the duality between the Buffa-Christiansen space and the Raviart-Thomas space \cite{BC2007}. Given that the average double layer boundary integral operator is compact on smooth surfaces, this duality supplies the coercivity. When the surface $\Gm$ is solely Lipschitz continuous, the coercivity of the mixed discretization remains valid if an additional stability condition is satisfied, which only depends on the boundary $\Gm$. This condition is mild as it is fulfilled for typical geometries in practical applications. As a consequence, the asymptotic quasi-optimality of numerical solutions is obtained. In addition, we show that the resulting matrix system is well-conditioned regardless of the boundary mesh refinement. To end this paragraph, we state here the essential assumption on the wave number of Maxwell's equations and stop repeating it in the remainder of the paper.
\begin{assumption}
\label{ast:wavenumber}
    The wave number is bounded away from the spectrum of the interior Maxwell's problem.
\end{assumption}

This paper is organized as follows. The next section recalls the standard functional setting for Maxwell's equations on Lipschitz polyhedra, including the tangential trace operators, trace spaces, relevant tangential differential operators and an integration by parts formula. Section~\ref{sec:potentials} gives the definition and properties of Maxwell potentials and boundary integral operators. The MFIE and its appropriate variational formulation are derived in Section~\ref{sec:mfie}. The governing MFIE operator is shown to be compact perturbation of a $S_{i\kappa^\prime}$-coercive operator, implying the unique solvability of the continuous variational problem. In Section~\ref{sec:discretization}, we analyze a Galerkin boundary element discretization of the MFIE. Its solvability under a stability assumption, an asymptotically quasi-optimal error estimate for discrete solutions, and the bounded condition number of the matrix system are established in this section. The numerical verification of the stability condition is discussed in Section~\ref{sec:verification}. Section~\ref{sec:results} presents some numerical results aiming to support the numerical analysis. Finally, we provide some conclusion remarks and discuss some possibilities for future works in Section~\ref{sec:conclusions}.

\section{Function spaces}
\label{sec:spaces}

In this section, we briefly recall the functional setting for Maxwell's equations on Lipschitz polyhedra. For more details and proofs, we refer the reader to the articles \cite{Buffa2001,BC2001,BC2001b,BCS2002a}.
Throughout the paper, regular symbols are used for scalar functions and their spaces, while bold symbols are used for vector fields and vector spaces.

Let $\Om \sst \R^3$ be a bounded polyhedron with Lipschitz continuous boundary $\Gm$, which only consists of flat faces $\Gm_j$, $j = 1, 2, \ldots, N_F$, i.e., $\ovl{\Gm} = \bigcup_{j} \ovl{\Gm_j}$. The unbounded complement $\Om^c := \R^3 \setminus \ovl{\Om}$ and the unit normal vector $\nv \in \LLs^{\infty}(\Gm)$ pointing from $\Om$ to $\Om^c$. On the polyhedral boundary $\Gm$, the usual Sobolev spaces $\Hs^s(\Gm)$ are defined invariantly for $s \in [0, 1]$, with convention $\Hs^0(\Gm) \equiv \Ls^2(\Gm)$ \cite[Section~1.3.3]{Grisvard1985}. For $s > 1$, the spaces $\Hs^s(\Gm)$ can be characterized face by face, with a careful treatment at their common edges, see \cite{BC2001,BC2001b}. We denote by $\Hs^{-s}(\Gm)$ the dual space of $\Hs^s(\Gm)$, and $\inprod{\cdot, \cdot}_{s, \Gm}$ their duality pairing. In addition, we introduce the space $\wt{\Hs}^{3/2}(\Gm) := \gm(\Hs^2(\Om))$ and its dual $\wt{\Hs}^{-3/2}(\Gm)$, with $\gm$ the standard trace operator\footnote{The tilde is used to distinguish the trace of $\Hs^2(\Om)$ on the boundary $\Gm$ and the space $\Hs^{3/2}(\Gm)$, as they are not identical in the case of a non-smooth Lipschitz domain $\Om$.}. The duality pairing between $\wt{\Hs}^{-3/2}(\Gm)$ and $\wt{\Hs}^{3/2}(\Gm)$ is denoted by $\inprod{\cdot, \cdot}_{\sim, 3/2, \Gm}$. Here, we make the usual identification $\Ls^2(\Gm) \sst \Hs^{-s}(\Gm)$ and $\Ls^2(\Gm) \sst \wt{\Hs}^{-3/2}(\Gm)$ via the $\Ls^2(\Gm)$-pairing. 

Next, we introduce the tangential trace operator $\gm_D$, which is defined for smooth vector fields $\ub \in \CCs^{\infty}(\ovl{\Om})$ by
\[
    (\gm_D \ub)(\xb) := \ub(\xb) \times \nv(\xb),
\]
for almost all $\xb \in \Gm$. This operator can be extended to a continuous mapping from $\HHs(\curlt, \Om)$ to $\HHs^{-1/2}(\Gm)$, see \cite[Theorem~1.5.1.1]{Grisvard1985}. In order to determine the actual range of $\gm_D$, we adopt the definition $\HHs^{s}_{\times}(\Gm) := \gm_D(\HHs^{s+ 1/2}(\Om))$, with $s \in (0, 1)$, from \cite[Definition~2.1]{BH2005}. For $s = 0$, we set the space $\HHs^0_{\times}(\Gm)$ identical with
\[
     \LLs^2_\tv(\Gm) :=  \brac{\ub \in \LLs^2(\Gm) : \ub \cdot \nv = 0}, 
\]
which is endowed with the anti-symmetric pairing
\[
    \inprod{\ub, \vb}_{\times, \Gm} := \int_\Gm (\ub \times \nv) \cdot \vb \ds, \qqqq \fa \ub, \vb \in \LLs^2_\tv(\Gm).
\]
The dual space of $\HHs^s_{\times}(\Gm)$ is denoted by $\HHs^{-s}_{\times}(\Gm)$, whose elements are identified via the $\LLs^2_\tv(\Gm)$-pairing $\inprod{\cdot, \cdot}_{\times, \Gm}$.

Let us introduce relevant surface differential operators. The surface operator $\curlt_\Gm : \wt{\Hs}^{3/2}(\Gm) \to \HHs^{1/2}_{\times}(\Gm)$ can be defined by \cite[Proposition~3.4]{BCS2002a}
\[
    \curlt_\Gm \gm(\varphi) = \gm_D(\gradt \varphi), \qqqqq \fa \varphi \in \Hs^2(\Om).
\]
Then, the surface divergence operator $\divt_\Gm : \HHs^{-1/2}_{\times}(\Gm) \to \wt{\Hs}^{-3/2}(\Gm)$ is defined as dual to $\curlt_\Gm$ in the sense
\[
    \inprod{\divt_\Gm \ub, \varphi}_{\sim, 3/2, \Gm} = - \inprod{\ub, \curlt_\Gm \varphi}_{1/2, \Gm}, \qqq \fa \ub \in \HHs^{-1/2}_{\times}(\Gm), \varphi \in \wt{\Hs}^{3/2}(\Gm).
\]
Now, we are able to introduce the space
\[
    \HHs^{-1/2}_{\times}(\divt_{\Gm}, \Gm) := \brac{\ub \in \HHs^{-1/2}_{\times}(\Gm) : \divt_{\Gm} \ub \in \Hs^{-1/2}(\Gm)},
\]
equipped with the graph norm
\[
    \norm{\ub}^2_{\HHs^{-1/2}_{\times}(\divt_{\Gm}, \Gm)} := \norm{\ub}^2_{\HHs^{-1/2}_{\times}(\Gm)} + \norm{\divt_\Gm \ub}^2_{\Hs^{-1/2}(\Gm)}.
\]

The next theorem presents the extension of the $\LLs^2_\tv(\Gm)$-pairing $\langle{\cdot, \cdot}\rangle_{\times, \Gm}$ to a bilinear form on the space $\HHs^{-1/2}_{\times}(\divt_{\Gm}, \Gm)$ and its self-duality with respect to this pairing, see, e.g., \cite{BC2001b} and \cite[Lemma~5.6]{BCS2002a}.

\begin{theorem}[self-duality of the space $\HHs^{-1/2}_{\times}(\divt_{\Gm}, \Gm)$]
\label{thm:duality}
    The pairing $\langle{\cdot, \cdot}\rangle_{\times, \Gm}$ can be extended to a continuous bilinear form on the space $\HHs^{-1/2}_{\times}(\divt_{\Gm}, \Gm)$. Moreover, $\HHs^{-1/2}_{\times}(\divt_{\Gm}, \Gm)$ becomes its own dual with respect to $\langle{\cdot, \cdot}\rangle_{\times, \Gm}$.
\end{theorem}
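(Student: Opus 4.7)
The plan is to first extend the $\LLs^2_\tv(\Gm)$-pairing via a Green-type integration by parts formula on $\HHs(\curlt,\Om)$, and then establish self-duality by combining this extension with the surface Hodge decomposition of the trace space.

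For the extension step, I would start from the classical curl Green identity for smooth vector fields $\ub, \vb \in \CCs^\infty(\ovl\Om)$:
\begin{equation*}
\inprod{\gm_D\ub, \gm_D\vb}_{\times,\Gm} = \int_\Om (\curlt\ub)\cdot\vb \,\di\xb - \int_\Om \ub\cdot(\curlt\vb)\,\di\xb.
\end{equation*}
The right-hand side is continuous on $\HHs(\curlt,\Om) \times \HHs(\curlt,\Om)$, and $\CCs^\infty(\ovl\Om)$ is dense in this space. Since $\gm_D : \HHs(\curlt,\Om) \to \HHs^{-1/2}_{\times}(\divt_{\Gm}, \Gm)$ is surjective and admits a continuous right inverse (a standard lifting result on Lipschitz polyhedra), I would take lifts $\Ub, \Vb$ of given $\ub, \vb \in \HHs^{-1/2}_{\times}(\divt_{\Gm}, \Gm)$ and define the extended pairing by the right-hand side. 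A short verification using the vanishing of $\gm_D$ on $\HHs_0(\curlt,\Om)$ shows this is independent of the chosen lifts, and the bound in the graph norm follows from the continuity of the lifting map.

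For self-duality, I would introduce the canonical map $J : \HHs^{-1/2}_{\times}(\divt_{\Gm}, \Gm) \to (\HHs^{-1/2}_{\times}(\divt_{\Gm}, \Gm))'$ defined by $J(\ub)(\vb) = \inprod{\ub, \vb}_{\times,\Gm}$ and prove it is an isomorphism. Continuity is immediate. For injectivity, if $J(\ub) = 0$ I would test against tangential traces of smooth vector fields, a dense subset on which the extended pairing coincides with the $\LLs^2_\tv(\Gm)$-integral, and exploit its non-degeneracy. Surjectivity is the crux: given $\ell$, I would construct a representative $\ub$ by exploiting the surface Hodge decomposition of $\HHs^{-1/2}_{\times}(\divt_{\Gm}, \Gm)$ into a $\gradt_\Gm$-component parametrized by scalar potentials in a quotient of $\Hs^{1/2}(\Gm)$ and a $\curlt_\Gm$-component parametrized by scalar potentials in a quotient of $\wt\Hs^{3/2}(\Gm)$, then identify each component of the dual via the scalar dualities $(\Hs^{1/2}(\Gm))' = \Hs^{-1/2}(\Gm)$ and $(\wt\Hs^{3/2}(\Gm))' = \wt\Hs^{-3/2}(\Gm)$. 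Both pieces land back in $\HHs^{-1/2}_{\times}(\divt_{\Gm}, \Gm)$, which closes the loop.

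The main obstacle will be the rigorous treatment of the Hodge decomposition on the polyhedral boundary $\Gm$ and the compatibility of the duality across the split. On smooth surfaces this is routine; on Lipschitz polyhedra, however, the non-standard trace spaces $\wt\Hs^{\pm 3/2}(\Gm)$ and the face-to-face compatibility of $\curlt_\Gm$ and $\divt_\Gm$ at edges and vertices demand the delicate machinery developed in \cite{BC2001b,BCS2002a}. Once the Hodge decomposition and its component-wise duality are in place, identifying the two resulting representations of $(\HHs^{-1/2}_{\times}(\divt_{\Gm}, \Gm))'$ and $\HHs^{-1/2}_{\times}(\divt_{\Gm}, \Gm)$ yields the claimed self-duality.
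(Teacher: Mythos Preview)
The paper does not give its own proof of this theorem; it is stated as a known result with a reference to \cite{BC2001b} and \cite[Lemma~5.6]{BCS2002a}. Your outline is faithful to the strategy in those references: the extension of $\inprod{\cdot,\cdot}_{\times,\Gm}$ via the curl Green identity and a bounded right inverse of $\gm_D$ is exactly how the continuity on $\HHs^{-1/2}_{\times}(\divt_\Gm,\Gm)$ is obtained (this is the content of Theorem~\ref{thm:integration_by_parts} here), and the surjectivity of the induced map into the dual is indeed handled in \cite{BC2001b,BCS2002a} through a Hodge-type splitting of the trace space and component-wise scalar duality.

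One point to tighten: the regularity bookkeeping in your Hodge decomposition is slightly off. For $\ub\in\HHs^{-1/2}_{\times}(\divt_\Gm,\Gm)$ the divergence-free part is $\curlt_\Gm q$ with $q$ in (a quotient of) $\Hs^{1/2}(\Gm)$, while the complementary part carries the $\divt_\Gm$-information and requires the potential to satisfy a Laplace--Beltrami constraint of the type $\Delta_\Gm p\in\Hs^{-1/2}(\Gm)$; it is this component, not the $\curlt_\Gm$-one, that brings in the nonstandard spaces. On polyhedra one must also account for a finite-dimensional harmonic cohomology summand when $\Gm$ is not simply connected. These are precisely the ``delicate machinery'' issues you flag, and they are where all the work in \cite{BC2001b,BCS2002a} lies; your sketch correctly identifies them but would need those results as black boxes to be complete.
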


The tangential trace operator $\gm_D$ maps from $\HHs(\curlt, \Om)$ onto $\HHs^{-1/2}_{\times}(\divt_{\Gm}, \Gm)$, making $\HHs^{-1/2}_{\times}(\divt_{\Gm}, \Gm)$ the natural trace space for electromagnetic fields. The following theorem provides an integration by parts formula for functions in $\HHs(\curlt, \Om)$, which is taken from \cite[Theorem~4.1]{BCS2002a} and \cite[Theorem~2.3]{CH2012}.

\begin{theorem}[integration by parts formula]
    \label{thm:integration_by_parts}
    The tangential trace operator $\gm_D$ continuously maps from $\HHs(\curlt, \Om)$ onto $\HHs^{-1/2}_{\times}(\divt_{\Gm}, \Gm)$, which possesses a continuous right inverse. In addition, the following integration by parts formula holds
    \begin{equation}
    \label{eq:identity}
        \int_\Om (\curlt \, \ub \cdot \vb - \ub \cdot \curlt \, \vb) \dx = -\inprod{\gm_D \ub, \gm_D \vb}_{\times, \Gm}, \qq \fa \ub, \vb \in \HHs(\curlt, \Om).
    \end{equation}
\end{theorem}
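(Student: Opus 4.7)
The plan is to proceed in three stages: (i)~establish the integration by parts identity \eqref{eq:identity} for smooth fields, (ii)~extend it to $\HHs(\curlt, \Om)$ while simultaneously proving that $\gm_D$ maps continuously into $\HHs^{-1/2}_{\times}(\divt_{\Gm}, \Gm)$, and (iii)~construct a continuous right inverse to obtain surjectivity.

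For stage~(i), I would take $\ub, \vb \in \CCs^{\infty}(\ovl{\Om})$, start from the pointwise identity $\nabla \cdot (\ub \times \vb) = \curlt \, \ub \cdot \vb - \ub \cdot \curlt \, \vb$, and apply the classical divergence theorem on the Lipschitz domain $\Om$. A direct manipulation of the triple product shows that the boundary term $\int_\Gm \nv \cdot (\ub \times \vb) \ds$ coincides with $-\int_\Gm (\ub \times \nv) \cdot \vb \ds = -\inprod{\gm_D \ub, \gm_D \vb}_{\times, \Gm}$ by the very definition of the anti-symmetric pairing on $\LLs^2_\tv(\Gm)$, which delivers \eqref{eq:identity} in the smooth case.

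For stage~(ii), the classical trace theorem on Lipschitz domains already provides continuity of $\gm_D$ from $\HHs(\curlt, \Om)$ into $\HHs^{-1/2}_{\times}(\Gm)$, so it only remains to control $\divt_{\Gm} \gm_D \ub$ in $\Hs^{-1/2}(\Gm)$. The key observation is the Stokes-type identity $\divt_{\Gm} \gm_D \ub = \nv \cdot \curlt \, \ub$, valid for smooth $\ub$ and extended by density of $\CCs^{\infty}(\ovl{\Om})$ in $\HHs(\curlt, \Om)$. Since $\curlt \, \ub \in \LLs^2(\Om)$ is automatically divergence-free, its normal trace is well-defined in $\Hs^{-1/2}(\Gm)$ with $\norm{\nv \cdot \curlt \, \ub}_{\Hs^{-1/2}(\Gm)} \leqc \norm{\ub}_{\HHs(\curlt, \Om)}$, which yields the continuity of $\gm_D$ into $\HHs^{-1/2}_{\times}(\divt_{\Gm}, \Gm)$. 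Both sides of \eqref{eq:identity} are then continuous bilinear forms on $\HHs(\curlt, \Om) \times \HHs(\curlt, \Om)$, using Theorem~\ref{thm:duality} to bound the right-hand side, so the identity extends from smooth fields to the whole space by density.

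For stage~(iii), surjectivity together with a continuous right inverse constitutes the technical heart of the theorem. The idea is to produce, given $\lmdb \in \HHs^{-1/2}_{\times}(\divt_{\Gm}, \Gm)$, a solution $\ub \in \HHs(\curlt, \Om)$ of the regularized Maxwell problem $\curlt \, \curlt \, \ub + \ub = \zrb$ in $\Om$ with $\gm_D \ub = \lmdb$; coercivity of the associated bilinear form on $\HHs(\curlt, \Om)$ modulo a preliminary lifting of $\lmdb$ then yields continuous dependence and hence the right inverse. The main obstacle, and the source of the genuine difficulty on a Lipschitz polyhedron, is the construction of this preliminary lifting: it requires a Hodge-type decomposition of $\HHs^{-1/2}_{\times}(\divt_{\Gm}, \Gm)$ into curl-free and divergence-free pieces that can be lifted separately, together with face-by-face arguments glued across edges and vertices with quantitative control of the resulting norms, which is the content developed in \cite{BC2001,BC2001b,BCS2002a}.
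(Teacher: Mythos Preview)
The paper does not supply its own proof of this theorem: it is stated as a known result and attributed to \cite[Theorem~4.1]{BCS2002a} and \cite[Theorem~2.3]{CH2012}. Your three-stage outline---divergence theorem for smooth fields, density extension using the identity $\divt_\Gm \gm_D \ub = \nv \cdot \curlt\,\ub$ together with the normal-trace bound for divergence-free fields, and surjectivity via a lifting built on a Hodge decomposition of $\HHs^{-1/2}_{\times}(\divt_\Gm,\Gm)$---is precisely the route taken in those references, so there is nothing substantive to compare. Your sketch is correct, including the vector-algebra check that the boundary term from the divergence theorem coincides with $-\inprod{\gm_D\ub,\gm_D\vb}_{\times,\Gm}$; just be aware that on a Lipschitz polyhedron the density of $\CCs^\infty(\ovl\Om)$ in $\HHs(\curlt,\Om)$ and the well-posedness of the normal trace $\nv\cdot\curlt\,\ub$ in $\Hs^{-1/2}(\Gm)$ are themselves nontrivial inputs that you are (rightly) taking from the same body of work you cite for stage~(iii).
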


When investigating the convergence rate of BEMs, higher regularities of the solution are required. The definition of the spaces $\HHs^s_{\times}(\divt_{\Gm}, \Gm)$, with $s > -\tfrac{1}{2}$, can be found in \cite{HS2003b,BH2005}. We omit those definitions for the sake of conciseness. Finally, we introduce the Neumann trace operator $\gm_N := \gm_D \circ \curlt$, which continuously maps from $\HHs(\curlt^2, \Om)$ onto $\HHs^{-1/2}_{\times}(\divt_{\Gm}, \Gm)$, see \cite{BH2005}.

\section{Boundary integral operators}
\label{sec:potentials}

In this paper, we consider the scattering of time-harmonic electromagnetic fields with angular frequency $\om > 0$. The exterior region $\Om^c$ is assumed to be filled by a homogeneous, isotropic and linear material with constant permittivity $\epsilon > 0$ and permeability $\mu > 0$. The wave number is defined by $\kappa := \om \sqrt{\epsilon\mu} > 0$. In aid of further analysis with integral operators of a purely imaginary wave number, we start from the ``the electric wave equation'' and the Silver-M\"{u}ller radiation condition with generic wave number $\sm$, which can be either $\kappa$ or $i\kappa^\prime$, with $\kappa, \kappa^\prime > 0$
\begin{align}
    & \,\, \curlt \, \curlt \, \eb - \sm^2 \eb = \zrb \qqqqqq \text{in} \q \Om \cup \Om^c, \label{eq:wave1} \\
    & \lim_{r \to \infty} \int_{\pa B_r} \abs{\curlt \, \eb \times \nv + i\sm (\nv \times \eb) \times \nv}^2 \ds = 0. \label{eq:SilverMuller1}
\end{align}
Here, $\eb$ is the unknown complex-valued vector field, $i$ stands for the imaginary unit, and $B_r$ denotes the ball of radius $r > 0$ centered at $\zrb$. 

When $\sm = \kappa$, the problem \eqref{eq:wave1}-\eqref{eq:SilverMuller1} in conjunction with the Dirichlet boundary condition
\begin{equation}
    \eb \times \nv = - \eb^{in} \times \nv \qqqq \text{on} \q \Gm, \label{eq:DBC}
\end{equation}
describes the scattering by a perfect electric conductor at the boundary $\Gm$ \cite[Section~6.4]{Colton1992}. In this case, the unknown $\eb$ represents the scattered electric field, and the incident electric field $\eb^{in}$ plays the role of excitation. The exterior problem has a unique solution $\eb$ for all wave number $\kappa$ according to Rellich's lemma \cite{Cessenat1996,Nedelec2001}, whereas the interior problem is only uniquely solvable if $\kappa^2$ does not coincide with one of the Dirichlet or Neumann eigenvalues of the $\curlt \, \curlt$-operator inside $\Om$.

When $\sm = i\kappa^\prime$, the equation \eqref{eq:wave1} is called the Yukawa-type equation. More details on the properties of boundary integral operators associated with this equation can be found in \cite[Section~5.6.4]{Nedelec2001}, \cite[Section~5.1]{Hiptmair2003b} and \cite{SW2009}. 

We denote by $G_\sm(\xb, \yb)$ the fundamental solution associated with $\Delta + \sm^2$, i.e.,
\[
    G_\sm(\xb, \yb) := \dfrac{\exp(i\sm\abs{\xb - \yb})}{4\pi \abs{\xb - \yb}}, \qqqq \xb \neq \yb.
\]
Then, the scalar and vectorial single layer potentials are respectively defined by
\[
    \Psi^\sm_V(\varphi)(\xb) := \int_\Gm \varphi(\yb) G_\sm(\xb, \yb) \ds(\yb), \qq \Psib^\sm_{\Ab}(\ub)(\xb) := \int_\Gm \ub(\yb) G_\sm(\xb, \yb) \ds(\yb),
\]
with $\xb \notin \Gm$. In electromagnetics, we are more interested in the Maxwell single and double layer potentials 
\[
    \Psib^\sm_{SL}(\ub) := \Psib^\sm_{\Ab} (\ub) + \dfrac{1}{\sm^2} \gradt \, \Psi^\sm_V(\divt_\Gm \ub), \qqq \Psib^\sm_{DL}(\ub) := \curlt\, \Psib^\sm_{\Ab}(\ub),
\]
which continuously map $\HHs^{-1/2}_{\times}(\divt_\Gm, \Gm)$ into $\HHs_{\textup{loc}}(\curlt^2, \Om \cup \Om^c)$, see \cite[Theorem~17]{Hiptmair2003b}. 
For any $\ub \in \HHs^{-1/2}_{\times}(\divt_\Gm, \Gm)$, the potentials $\Psib^{\sm}_{SL}(\ub)$ and $\Psib^{\sm}_{DL}(\ub)$ are solutions to the problem \eqref{eq:wave1}-\eqref{eq:SilverMuller1}. Therefore, it immediately holds that
\begin{equation}
\label{eq:sym1}
    \curlt \circ \Psib^\sm_{SL} = \Psib^\sm_{DL}, \qqqqq \curlt \circ \Psib^\sm_{DL} = \sm^2 \Psib^\sm_{SL}.
\end{equation}
On the other hand, every solution $\ub \in \HHs_{\textup{loc}}(\curlt^2, \Om \cup \Om^c)$ to the problem \eqref{eq:wave1}-\eqref{eq:SilverMuller1} satisfies the Stratton-Chu representation formula (see, e.g., \cite[Theorem~6.2]{Colton1992}, \cite[Theorem~5.5.1]{Nedelec2001} and \cite[Theorem~22]{Hiptmair2003b})
\begin{equation}
\label{eq:representation}
    \ub(\xb) = - \Psib^{\sm}_{DL}(\left[\gm_D\right]_{\Gm} \ub)(\xb) - \Psib^{\sm}_{SL}(\left[\gm_N\right]_{\Gm} \ub) (\xb), \qqq \xb \in \Om \cup \Om^c,
\end{equation}
where $\left[\gm\right]_{\Gm} := \gm^+ - \gm^-$ is the jump of some trace $\gm$ across the boundary $\Gm$, and the superscripts $-$ and $+$ designate traces onto $\Gm$ from $\Om$ and $\Om^c$, respectively.
In addition, the following jump relations hold true \cite[Theorem~18]{Hiptmair2003b}
\begin{equation}
    \label{eq:jump}
    \left[\gm_D\right]_\Gm \circ \Psib^\sm_{SL} = 0, \qqqqqq \left[\gm_N\right]_\Gm \circ \Psib^\sm_{SL} = -Id,
\end{equation}
where $Id$ stands for the identity operator. 

Next, let us denote by $\brac{\gm}_\Gm := \frac{1}{2}(\gm^+ + \gm^-)$ the average of exterior and interior traces $\gm$ on the boundary $\Gm$. The following boundary integral operators are well-defined and continuous \cite{BHV+2003,Hiptmair2003b}. 
\begin{definition}
\label{def:intops}
    For $\sm = \kappa$ or $\sm = i\kappa^\prime$, with $\kappa, \kappa^\prime > 0$, we define
    \begin{align*}
        & S_\sm := \brac{\gm_D}_\Gm \circ \Psib^\sm_{SL} && : \HHs^{-1/2}_{\times}(\divt_\Gm, \Gm) \to \HHs^{-1/2}_{\times}(\divt_\Gm, \Gm), \\
        & C_\sm := \brac{\gm_N}_\Gm \circ \Psib^\sm_{SL} && : \HHs^{-1/2}_{\times}(\divt_\Gm, \Gm) \to \HHs^{-1/2}_{\times}(\divt_\Gm, \Gm).
    \end{align*}
\end{definition}
By means of the relations \eqref{eq:sym1}, the tangential and Neumann traces of the Maxwell double layer potential $\Psib^\sm_{DL}$ can be derived accordingly. Then, the jump relations \eqref{eq:jump} imply that
\[
    \gm_D^{\pm} \circ \Psib^\sm_{SL} = \dfrac{1}{\sm^2} \gm_N^{\pm} \circ \Psib^\sm_{DL} = S_{\sm}, \qqq 
    \gm^{\pm}_N \circ \Psib^\sm_{SL} = \gm_D^{\pm} \circ \Psib^\sm_{DL} = \mp \dfrac{1}{2} Id + C_\sm.
\]

The following compactness is essential for further analysis of the MFIE, as it allows us to primarily work with the operator $C_{i\kappa^\prime}$ and then transfer to $C_\kappa$ by a compact perturbation argument. This result can be obtained by following the approach in the proof of \cite[Theorem~3.4.1]{Nedelec2001} or \cite[Theorem~3.12]{BHV+2003}, based on the regularity of the kernel $G_\kappa - G_{i\kappa^\prime}$.

\begin{lemma}
\label{lem:compact1}
    For any $\kappa, \kappa^\prime > 0$, the operator $C_\kappa - C_{i\kappa^\prime}$ is compact.
\end{lemma}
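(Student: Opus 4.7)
The plan is to factor $C_\kappa - C_{i\kappa^\prime}$ through a smoothing integral operator and then invoke compactness via a gain of regularity, precisely the strategy suggested by the references \cite{Nedelec2001,BHV+2003} cited in the excerpt. First, I would rewrite $C_\sm$ in a more convenient form. Combining $\gm_N = \gm_D \circ \curlt$ with the first identity in \eqref{eq:sym1}, I obtain $C_\sm = \brac{\gm_N}_\Gm \circ \Psib^\sm_{SL} = \brac{\gm_D}_\Gm \circ \Psib^\sm_{DL}$, so that
\[
    C_\kappa - C_{i\kappa^\prime} = \brac{\gm_D}_\Gm \circ \paren{\Psib^\kappa_{DL} - \Psib^{i\kappa^\prime}_{DL}},
\]
and the integral kernel of the right-hand side is essentially $\nabla_\xb(G_\kappa - G_{i\kappa^\prime})(\xb,\yb) \times \cdot$.

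Second, I would examine the regularity of $G_\kappa - G_{i\kappa^\prime}$. Writing $r := \abs{\xb - \yb}$ and Taylor-expanding the exponentials, the singular factor $\tfrac{1}{4\pi r}$ that is common to both $G_\kappa$ and $G_{i\kappa^\prime}$ cancels, so that
\[
    G_\kappa(\xb, \yb) - G_{i\kappa^\prime}(\xb, \yb) = \frac{\exp(i\kappa r) - \exp(-\kappa^\prime r)}{4\pi r}
\]
extends to a real-analytic function of $\xb - \yb$ on all of $\R^3$. Its $\xb$-gradient is therefore smooth on $\R^3 \times \R^3$, and in particular bounded with bounded derivatives uniformly when $(\xb, \yb)$ ranges over $U \times \Gm$ for any bounded open set $U \sst \R^3$.

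Third, the integral operator with this smooth kernel acts as a smoothing operator: for any $\ub \in \HHs^{-1/2}_{\times}(\divt_\Gm, \Gm)$, the vector field $\wb := \paren{\Psib^\kappa_{DL} - \Psib^{i\kappa^\prime}_{DL}}\ub$ belongs to $\CCs^\infty(\R^3)$ (so it has no jump across $\Gm$ and $\brac{\gm_D}_\Gm \wb = \gm_D \wb$ is unambiguous), with bounds of the form $\norm{\wb}_{\HHs^k(U)} \leqc \norm{\ub}_{\HHs^{-1/2}_{\times}(\divt_\Gm, \Gm)}$ on any bounded $U \sst \R^3$ and any $k \in \N$; the bound is obtained by pairing $\ub$ against the smooth kernel in $\HHs^{-1/2}_{\times}(\divt_\Gm, \Gm)$ via Theorem~\ref{thm:duality}. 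Composing with the continuous tangential trace of Theorem~\ref{thm:integration_by_parts} and the higher-regularity trace results recalled from \cite{BH2005,HS2003b}, the operator $C_\kappa - C_{i\kappa^\prime}$ factors as a bounded map into $\HHs^s_{\times}(\divt_\Gm, \Gm)$ for some $s > -\tfrac{1}{2}$ followed by the Rellich-type compact embedding $\HHs^s_{\times}(\divt_\Gm, \Gm) \cpemb \HHs^{-1/2}_{\times}(\divt_\Gm, \Gm)$, and is therefore compact.

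The main obstacle lies in making this last step fully rigorous on a Lipschitz polyhedron, where the higher-order trace spaces $\HHs^s_{\times}(\divt_\Gm, \Gm)$ are more delicate than their smooth-surface counterparts because of the edge and vertex singularities intrinsic to their definition, and the required Rellich-type compact embedding has to be invoked in that piecewise setting. Once the face-by-face formalism of \cite{Buffa2001,BC2001,BC2001b} together with the higher-order trace characterisations of \cite{BH2005,HS2003b} are in place, however, the argument mirrors the smooth-surface proofs in \cite[Theorem~3.4.1]{Nedelec2001} and \cite[Theorem~3.12]{BHV+2003}, since the only substantive analytic input is the cancellation of the singular parts of the two fundamental solutions.
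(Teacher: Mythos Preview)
Your approach is the one the paper points to: the paper does not give a self-contained proof but simply refers to \cite[Theorem~3.4.1]{Nedelec2001} and \cite[Theorem~3.12]{BHV+2003}, invoking precisely the regularity gain of the difference kernel $G_\kappa - G_{i\kappa^\prime}$ that you exploit. The overall strategy---cancel the leading singularity, obtain a smoothing operator, and conclude compactness via a Rellich-type embedding---is correct and is what the cited references do.

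There is, however, a concrete overstatement in your second step. Expanding,
\[
    G_\kappa(\xb,\yb) - G_{i\kappa^\prime}(\xb,\yb)
    = \frac{e^{i\kappa r}-e^{-\kappa^\prime r}}{4\pi r}
    = \frac{1}{4\pi}\Bigl[(i\kappa+\kappa^\prime) - \tfrac{1}{2}(\kappa^2+{\kappa^\prime}^2)\,r + \OO(r^2)\Bigr],
\]
so odd powers of $r=\abs{\xb-\yb}$ survive. Since $r$ is not differentiable at the origin as a function of $\xb-\yb$, the difference is continuous on $\R^3$ but \emph{not} real-analytic, and its gradient is merely bounded (with leading term proportional to $(\xb-\yb)/r$), not smooth. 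Hence $\wb := (\Psib^\kappa_{DL}-\Psib^{i\kappa^\prime}_{DL})\ub$ is not in $\CCs^\infty(\R^3)$ for general $\ub$, and the claimed $\HHs^k(U)$ bounds for arbitrary $k$ fail.

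This does not break the argument, only the stated regularity. What you actually need---and what the references establish---is a \emph{fixed} positive gain: the difference kernel for the double layer is two orders less singular than $\nabla_\xb G_\kappa$ alone, which suffices for $C_\kappa - C_{i\kappa^\prime}$ to map $\HHs^{-1/2}_\times(\divt_\Gm,\Gm)$ boundedly into some $\HHs^{s}_\times(\divt_\Gm,\Gm)$ with $s>-\tfrac12$, after which the compact embedding you invoke finishes the proof. Replace ``real-analytic'' and ``$\CCs^\infty$'' by this quantified but finite smoothing, and your write-up is in line with the paper's intended argument.
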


Now, we examine the properties of the boundary integral operators with purely imaginary wave number $\sm = i\kappa^\prime$, namely $S_{i\kappa^\prime}$ and $C_{i\kappa^\prime}$. The following lemma is taken from \cite[Theorem~21]{Hiptmair2003b} and \cite[Theorem~2.5]{SW2009}.

\begin{lemma}[$\HHs_\times^{-1/2}(\divt_\Gm, \Gm)$-ellipticity of $S_{i\kappa^\prime}$]
\label{lem:ellipticity}
    For $\kappa^\prime > 0$, the integral operator $S_{i\kappa^\prime}$ is symmetric with respect to $\langle{\cdot, \cdot}\rangle_{\times, \Gm}$ and $\HHs_\times^{-1/2}(\divt_\Gm, \Gm)$-elliptic, i.e., there exists a constant $C > 0$ depending only on $\kappa^\prime$ and $\Gm$ such that
    \begin{align*}
        & \inprod{\vb, S_{i\kappa^\prime} \ub}_{\times, \Gm} = \inprod{\ub, S_{i\kappa^\prime} \vb}_{\times, \Gm}, && \fa \ub, \vb \in \HHs^{-1/2}_\times(\divt_\Gm, \Gm), \\
        & \inprod{\ub, S_{i\kappa^\prime} \ovl{\ub}}_{\times, \Gm} \ge C \norm{\ub}^2_{\HHs^{-1/2}_\times(\divt_\Gm, \Gm)}, && \fa \ub \in \HHs^{-1/2}_\times(\divt_\Gm, \Gm).
    \end{align*}
\end{lemma}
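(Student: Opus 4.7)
The proof strategy is to represent the quadratic form $\inprod{\ub, S_{i\kappa^\prime}\ovl{\ub}}_{\times, \Gm}$ by a bulk integral over $\Om \cup \Om^c$. Given $\ub \in \HHs^{-1/2}_\times(\divt_\Gm, \Gm)$, I would set $\eb := \Psib^{i\kappa^\prime}_{SL}(\ub)$. By the mapping properties recalled before Definition~\ref{def:intops}, $\eb \in \HHs_{\textup{loc}}(\curlt^2, \Om \cup \Om^c)$ and solves $\curlt\curlt\eb + \kappa^{\prime 2} \eb = \zrb$ on each side of $\Gm$. Moreover, since $\sm = i\kappa^\prime$ is purely imaginary, the Silver-M\"{u}ller condition \eqref{eq:SilverMuller1} degenerates into exponential decay at infinity, so $\eb$ also lies in $\HHs(\curlt, \Om^c)$. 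The jump relations \eqref{eq:jump} and the average trace formulas displayed after Definition~\ref{def:intops} yield $\gm_D^{\pm} \eb = S_{i\kappa^\prime}\ub$ and $[\gm_N]_\Gm \eb = -\ub$, and the real-valuedness of the kernel $G_{i\kappa^\prime}$ additionally gives $\gm_D^{\pm} \ovl{\eb} = S_{i\kappa^\prime} \ovl{\ub}$.

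Ellipticity would then follow from applying the integration by parts formula \eqref{eq:identity} to the test pair $(\ovl{\eb}, \curlt \eb)$ in $\Om$, and separately in $\Om^c$ after truncation with a large ball $B_R$ and passage to the limit. Substituting $\curlt\curlt \eb = -\kappa^{\prime 2} \eb$ converts the bulk sides into $\int (\abs{\curlt \eb}^2 + \kappa^{\prime 2} \abs{\eb}^2)$; the two boundary contributions carry opposite signs because the outward normals of $\Om$ and $\Om^c$ on $\Gm$ are opposite, so that the anti-symmetry of $\langle\cdot,\cdot\rangle_{\times, \Gm}$ combined with the jump of $\gm_N \eb$ collapses them into a single pairing, giving
\[
    \inprod{\ub, S_{i\kappa^\prime}\ovl{\ub}}_{\times, \Gm} = \int_{\Om \cup \Om^c} \left(\abs{\curlt \eb}^2 + \kappa^{\prime 2} \abs{\eb}^2\right) \dx.
\]
Using the identity $\ub = -[\gm_N]_\Gm \eb$, the continuity of the Neumann trace operators $\gm_N^\pm$ on $\HHs(\curlt^2, \cdot)$, and the fact that $\curlt\curlt \eb$ is a scalar multiple of $\eb$ (which makes the $\HHs(\curlt^2)$- and $\HHs(\curlt)$-norms of $\eb$ equivalent on each side of $\Gm$), I would then estimate
\[
    \norm{\ub}_{\HHs^{-1/2}_\times(\divt_\Gm, \Gm)}^2 \leqc \norm{\eb}_{\HHs(\curlt, \Om)}^2 + \norm{\eb}_{\HHs(\curlt, \Om^c)}^2 \leqc \inprod{\ub, S_{i\kappa^\prime}\ovl{\ub}}_{\times, \Gm},
\]
with constants depending only on $\kappa^\prime$ and $\Gm$. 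For the symmetry statement, the same computation with $\ovl{\eb}$ replaced by $\fb := \Psib^{i\kappa^\prime}_{SL}(\vb)$ would produce
\[
    \inprod{\ub, S_{i\kappa^\prime} \vb}_{\times, \Gm} = \int_{\Om \cup \Om^c} \left(\curlt \eb \cdot \curlt \fb + \kappa^{\prime 2} \eb \cdot \fb\right) \dx,
\]
whose right-hand side is manifestly symmetric in $(\eb, \fb)$, and hence in $(\ub, \vb)$.

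The main technical point to watch is the careful tracking of signs arising from the opposite outward normals of $\Om$ and $\Om^c$ in the two applications of \eqref{eq:identity}, combined with the anti-symmetry of $\langle \cdot, \cdot \rangle_{\times, \Gm}$; once these are bookkept correctly, the vanishing jump $[\gm_D]_\Gm \eb = \zrb$ and the nontrivial jump $[\gm_N]_\Gm \eb = -\ub$ produce exactly the right cancellations. Crucially, the exponential decay imposed by $\sm = i\kappa^\prime$ renders the boundary contribution on $\pa B_R$ harmless in the limit $R \to \infty$, so no delicate radiation-type argument is required, in contrast to the purely oscillatory case $\sm = \kappa$.
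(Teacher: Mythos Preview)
Your argument is correct and is precisely the standard transmission/energy argument used in the references the paper cites for this lemma (Hiptmair 2003, Theorem~21, and Steinbach--Windisch 2009, Theorem~2.5); the paper itself does not supply an independent proof but simply invokes those results. The sign bookkeeping and the exponential decay at infinity for $\sm = i\kappa^\prime$ are exactly the points those references hinge on, so nothing is missing.
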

The symmetry and ellipticity of $S_{i\kappa^\prime}$ imply that the space $\XXs_{\kappa^\prime} := \HHs^{-1/2}_\times(\divt_\Gm, \Gm)$ equipped with the inner product
\[
    \paren{\ub, \vb}_{\XXs_{\kappa^\prime}} := \inprod{\ub, S_{i\kappa^\prime} \ovl{\vb}}_{\times, \Gm}, \qqqqq \ub, \vb \in \HHs^{-1/2}_\times(\divt_\Gm, \Gm),
\]
is a Hilbert space. Its induced norm is denoted by $\norm{\cdot}_{\XXs_{\kappa^\prime}}$. Please note that the two spaces $\XXs_{\kappa^\prime}$ and $\HHs^{-1/2}_\times(\divt_\Gm, \Gm)$ only differ in their equipped norms. In what follows, the space $\XXs_{\kappa^\prime}$ is only used when its inner product and norm are involved or for a compact notation, whereas $\HHs^{-1/2}_\times(\divt_\Gm, \Gm)$ is used as the space of their elements (independent of $\kappa^\prime$).

The next results concern the contraction properties of the operators $2C_{i\kappa^\prime}$ and $\tfrac{1}{2} Id \pm C_{i\kappa^\prime}$. The latter were shown in \cite{SW2009} by extending the idea in \cite[Theorem~3.1]{SW2001} for Laplace's equation. In the following, we prove the contraction of $2C_{i\kappa^\prime}$ with respect to the norm $\norm{\cdot}_{\XXs_{\kappa^\prime}}$ by means of the Calder\'{o}n projection formulas for Maxwell's equations.
\begin{lemma}[Contraction property]
    \label{eq:contraction}
    For any $\kappa^\prime > 0$, there holds that
    \begin{align}
        & \inprod{\vb, C_{i\kappa^\prime} \ub}_{\times, \Gm} = \inprod{\ub, C_{i\kappa^\prime} \vb}_{\times, \Gm}, && \fa \ub, \vb \in \HHs^{-1/2}_\times(\divt_\Gm, \Gm), \label{eq:symmetry_Cik} \\
        & \norm{C_{i\kappa^\prime} \ub}_{\XXs_{\kappa^\prime}} \le \beta \norm{\ub}_{\XXs_{\kappa^\prime}}, && \fa \ub \in \HHs^{-1/2}_\times(\divt_\Gm, \Gm), \label{eq:contraction_Cik}
    \end{align}
    where the constant $\beta > 0$ is defined by
    \begin{equation}
    \label{eq:beta}
        \beta(\kappa^\prime, \Gm) := \sqrt{\dfrac{1}{4} - \dfrac{{\kappa^\prime}^2}{C_S^2}} < \dfrac{1}{2}, \qqq
        C_S(\kappa^\prime, \Gm) := \sup_{\ub \in \XXs_{\kappa^\prime}} \dfrac{\norm{S^{-1}_{i\kappa^\prime} \ub}_{\XXs_{\kappa^\prime}}}{\norm{\ub}_{\XXs_{\kappa^\prime}}}.
    \end{equation}
\end{lemma}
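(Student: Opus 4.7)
The plan is to derive both assertions from the Maxwell Calderón projector identities, with the symmetry \eqref{eq:symmetry_Cik} obtained as a preliminary step via a Green-type identity applied to Maxwell potentials.

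To prove \eqref{eq:symmetry_Cik}, I set $\Eb := \Psib^{i\kappa^\prime}_{SL}(\ub)$ and $\Fb := \Psib^{i\kappa^\prime}_{DL}(\vb)$, both of which solve \eqref{eq:wave1}-\eqref{eq:SilverMuller1} in $\Om \cup \Om^c$ and decay exponentially at infinity. Applying \eqref{eq:identity} twice in $\Om$, with the pairs $(\curlt \Eb, \Fb)$ and $(\Eb, \curlt \Fb)$, and summing yields the Maxwell Green identity
\[
\int_\Om (\curlt\,\curlt\,\Eb\cdot\Fb - \Eb\cdot\curlt\,\curlt\,\Fb)\dx = -\inprod{\gm_D^- \Eb, \gm_N^- \Fb}_{\times,\Gm} - \inprod{\gm_N^- \Eb, \gm_D^- \Fb}_{\times,\Gm}.
\]
Since $\curlt\,\curlt\,\Eb = -{\kappa^\prime}^2 \Eb$ and likewise for $\Fb$, the left-hand side vanishes, and the antisymmetry of the pairing recasts the resulting equality as $\inprod{\gm_N^- \Fb, \gm_D^- \Eb}_{\times,\Gm} = \inprod{\gm_N^- \Eb, \gm_D^- \Fb}_{\times,\Gm}$. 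The same computation in $\Om^c$, after accounting for the reversed outward normal, gives the analogous relation with $+$ traces. Substituting $\gm_D^\pm\Eb = S_{i\kappa^\prime}\ub$, $\gm_N^\pm\Eb = \mp\tfrac{1}{2}\ub + C_{i\kappa^\prime}\ub$, $\gm_D^\pm\Fb = \mp\tfrac{1}{2}\vb + C_{i\kappa^\prime}\vb$, $\gm_N^\pm\Fb = -{\kappa^\prime}^2 S_{i\kappa^\prime}\vb$ into the two identities and subtracting cancels the terms free of $C_{i\kappa^\prime}$ as well as the quadratic $\inprod{C_{i\kappa^\prime}\ub, C_{i\kappa^\prime}\vb}_{\times,\Gm}$ term, producing $\inprod{\ub, C_{i\kappa^\prime}\vb}_{\times,\Gm} + \inprod{C_{i\kappa^\prime}\ub, \vb}_{\times,\Gm} = 0$, which combined with the antisymmetry of $\inprod{\cdot,\cdot}_{\times,\Gm}$ is precisely \eqref{eq:symmetry_Cik}.

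Next, I derive the Calderón identities. Taking $\gm_D^-$ and $\gm_N^-$ of the representation \eqref{eq:representation} for an interior Maxwell field (extended by zero to $\Om^c$), together with the jump and trace relations, yields $(\tfrac{1}{2}Id - C_{i\kappa^\prime})\gm_D^-\ub = S_{i\kappa^\prime}\gm_N^-\ub$ and $(\tfrac{1}{2}Id - C_{i\kappa^\prime})\gm_N^-\ub = -{\kappa^\prime}^2 S_{i\kappa^\prime}\gm_D^-\ub$ for the interior Cauchy pair $(\gm_D^-\ub, \gm_N^-\ub)$. The $2\times 2$ operator matrix with diagonal entries $\tfrac{1}{2}Id + C_{i\kappa^\prime}$ and off-diagonal entries $S_{i\kappa^\prime}$ and $-{\kappa^\prime}^2 S_{i\kappa^\prime}$ therefore acts as the identity on this pair, hence is the interior Calderón projector; squaring it and comparing the $(1,1)$ and $(1,2)$ entries yields
\[
C_{i\kappa^\prime}^2 = \tfrac{1}{4}\,Id + {\kappa^\prime}^2 S_{i\kappa^\prime}^2, \qquad C_{i\kappa^\prime} S_{i\kappa^\prime} + S_{i\kappa^\prime} C_{i\kappa^\prime} = 0.
\]

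To conclude \eqref{eq:contraction_Cik}, note that the real-valued kernel $G_{i\kappa^\prime}$ implies $\overline{C_{i\kappa^\prime}\ub} = C_{i\kappa^\prime}\ovl{\ub}$ and $\overline{S_{i\kappa^\prime}\ub} = S_{i\kappa^\prime}\ovl{\ub}$. The symmetry \eqref{eq:symmetry_Cik} combined with antisymmetry of $\inprod{\cdot,\cdot}_{\times,\Gm}$ gives $\inprod{C_{i\kappa^\prime}\ub, \wb}_{\times,\Gm} = -\inprod{\ub, C_{i\kappa^\prime}\wb}_{\times,\Gm}$, with the analogous identity for $S_{i\kappa^\prime}$; together with the anticommutation $C_{i\kappa^\prime}S_{i\kappa^\prime} = -S_{i\kappa^\prime}C_{i\kappa^\prime}$, a short computation shows that $C_{i\kappa^\prime}$ is self-adjoint while $S_{i\kappa^\prime}$ is anti-self-adjoint in the $\XXs_{\kappa^\prime}$ inner product. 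The latter gives $\paren{\ub, S_{i\kappa^\prime}^2\ub}_{\XXs_{\kappa^\prime}} = -\norm{S_{i\kappa^\prime}\ub}^2_{\XXs_{\kappa^\prime}}$, and therefore
\[
\norm{C_{i\kappa^\prime}\ub}^2_{\XXs_{\kappa^\prime}} = \paren{\ub, C_{i\kappa^\prime}^2\ub}_{\XXs_{\kappa^\prime}} = \tfrac{1}{4}\norm{\ub}^2_{\XXs_{\kappa^\prime}} - {\kappa^\prime}^2 \norm{S_{i\kappa^\prime}\ub}^2_{\XXs_{\kappa^\prime}}.
\]
The definition \eqref{eq:beta} of $C_S$ gives $\norm{S_{i\kappa^\prime}\ub}_{\XXs_{\kappa^\prime}} \ge \norm{\ub}_{\XXs_{\kappa^\prime}}/C_S$, so the right-hand side is bounded above by $\beta^2 \norm{\ub}^2_{\XXs_{\kappa^\prime}}$, establishing \eqref{eq:contraction_Cik}; nonnegativity of the left-hand side also forces $\beta^2 \ge 0$, while the strict inequality $\beta < \tfrac{1}{2}$ follows since ${\kappa^\prime}^2/C_S^2 > 0$. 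The main obstacle is the symmetry argument: the specific choice of $\Eb$ as a single-layer and $\Fb$ as a double-layer potential is essential so that the cross-term $\inprod{\ub, C_{i\kappa^\prime}\vb} + \inprod{C_{i\kappa^\prime}\ub, \vb}$ survives the subtraction of the interior and exterior Green identities, and the sign conventions in the jump relations and the antisymmetric pairing must be tracked with care throughout.
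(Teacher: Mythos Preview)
Your proof is correct and follows essentially the same route as the paper. For the symmetry \eqref{eq:symmetry_Cik} you spell out the Green-identity argument that the paper only references (citing \cite[Theorem~3.9]{BHV+2003}); for the contraction \eqref{eq:contraction_Cik} you use the same Calder\'on identities $C_{i\kappa^\prime}^2 = \tfrac{1}{4}Id + {\kappa^\prime}^2 S_{i\kappa^\prime}^2$ and $C_{i\kappa^\prime}S_{i\kappa^\prime} + S_{i\kappa^\prime}C_{i\kappa^\prime} = 0$ and arrive at the same key identity $\norm{C_{i\kappa^\prime}\ub}^2_{\XXs_{\kappa^\prime}} = \tfrac{1}{4}\norm{\ub}^2_{\XXs_{\kappa^\prime}} - {\kappa^\prime}^2\norm{S_{i\kappa^\prime}\ub}^2_{\XXs_{\kappa^\prime}}$, merely repackaging the paper's direct computation in $\inprod{\cdot,\cdot}_{\times,\Gm}$ as the statement that $C_{i\kappa^\prime}$ is self-adjoint and $S_{i\kappa^\prime}$ anti-self-adjoint in the $\XXs_{\kappa^\prime}$ inner product.
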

\begin{proof}
    The symmetry of $C_{i\kappa^\prime}$ with respect to $\inprod{\cdot, \cdot}_{\times, \Gm}$ can be obtained by following the lines in the proof of \cite[Theorem~3.9]{BHV+2003}, with the help of the integration by parts formula \eqref{eq:identity}. For the contraction property of $2C_{i\kappa^\prime}$, we need the following Calder\'{o}n projection formulas, which are derived from the Stratton-Chu representation formula \eqref{eq:representation} for $\sm = i\kappa^\prime$ and \cite[Formulation~35]{BH2003}
    \begin{align*}
        C_{i\kappa^\prime} \circ S_{i\kappa^\prime} & = -S_{i\kappa^\prime} \circ C_{i\kappa^\prime}, \\
        C_{i\kappa^\prime} \circ C_{i\kappa^\prime} & = \dfrac{1}{4} Id + {\kappa^\prime}^2 S_{i\kappa^\prime} \circ S_{i\kappa^\prime}.
    \end{align*}
    Based on these identities and the symmetry of $S_{i\kappa^\prime}$ and $C_{i\kappa^\prime}$, we can deduce that
    \begin{align*}
        \norm{C_{i\kappa^\prime} \ub}^2_{\XXs_{\kappa^\prime}} 
        & = \inprod{C_{i\kappa^\prime} \ub, \paren{S_{i\kappa^\prime} \circ C_{i\kappa^\prime}} \ovl{\ub}}_{\times, \Gm} \\
        & = \inprod{\ub, \paren{C_{i\kappa^\prime} \circ C_{i\kappa^\prime} \circ S_{i\kappa^\prime}} \ovl{\ub}}_{\times, \Gm} \\
        & = \dfrac{1}{4} \inprod{\ub, S_{i\kappa^\prime} \ovl{\ub}}_{\times, \Gm} + {\kappa^\prime}^2 \inprod{\ub, \paren{S_{i\kappa^\prime} \circ S_{i\kappa^\prime} \circ S_{i\kappa^\prime}} \ovl{\ub}}_{\times, \Gm} \\
        & = \dfrac{1}{4} \norm{\ub}^2_{\XXs_{\kappa^\prime}} - {\kappa^\prime}^2 \norm{S_{i\kappa^\prime} \ub}^2_{\XXs_{\kappa^\prime}} \\
        & \le \paren{\dfrac{1}{4} - \dfrac{{\kappa^\prime}^2}{C_S^2}} \norm{\ub}^2_{\XXs_{\kappa^\prime}}.
    \end{align*}
\end{proof}
The contraction properties of $\tfrac{1}{2} Id \pm C_{i\kappa^\prime}$ with respect to $\norm{\cdot}_{\XXs_{\kappa^\prime}}$ can be achieved as a consequence of \eqref{eq:contraction_Cik}.
\begin{corollary}
    For any $\kappa^\prime > 0$, the operators $\tfrac{1}{2} Id \pm C_{i\kappa^\prime}$ are contractions with respect to the norm $\norm{\cdot}_{\XXs_{\kappa^\prime}}$. In particular, for all $\ub \in \HHs^{-1/2}_\times(\divt_\Gm, \Gm)$, it holds that
    \begin{equation}
        \label{eq:contraction_MFIE}
        \paren{\dfrac{1}{2} - \beta} \norm{\ub}_{\XXs_{\kappa^\prime}} \le \norm{\paren{\dfrac{1}{2} Id \pm C_{i\kappa^\prime}} \ub}_{\XXs_{\kappa^\prime}} \le \paren{\dfrac{1}{2} + \beta} \norm{\ub}_{\XXs_{\kappa^\prime}}.
    \end{equation}
\end{corollary}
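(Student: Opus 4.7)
The plan is to derive both inequalities in \eqref{eq:contraction_MFIE} directly from the contraction bound \eqref{eq:contraction_Cik} established in Lemma~\ref{eq:contraction}, by nothing more than the triangle inequality and its reverse form in the Hilbert space $\XXs_{\kappa^\prime}$. Since $\XXs_{\kappa^\prime}$ is equipped with the inner product induced by the elliptic, symmetric form $\inprod{\cdot, S_{i\kappa^\prime} \ovl{\cdot}}_{\times, \Gm}$, it is genuinely a Hilbert space and the standard triangle inequalities are available without any modification.

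First I would handle the upper bound by splitting $\paren{\tfrac{1}{2} Id \pm C_{i\kappa^\prime}}\ub = \tfrac{1}{2}\ub \pm C_{i\kappa^\prime}\ub$ and applying the triangle inequality, which gives $\norm{\paren{\tfrac{1}{2} Id \pm C_{i\kappa^\prime}}\ub}_{\XXs_{\kappa^\prime}} \le \tfrac{1}{2}\norm{\ub}_{\XXs_{\kappa^\prime}} + \norm{C_{i\kappa^\prime}\ub}_{\XXs_{\kappa^\prime}}$, and then bounding the second term by $\beta\,\norm{\ub}_{\XXs_{\kappa^\prime}}$ via \eqref{eq:contraction_Cik}. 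Symmetrically, the reverse triangle inequality yields $\norm{\paren{\tfrac{1}{2} Id \pm C_{i\kappa^\prime}}\ub}_{\XXs_{\kappa^\prime}} \ge \tfrac{1}{2}\norm{\ub}_{\XXs_{\kappa^\prime}} - \norm{C_{i\kappa^\prime}\ub}_{\XXs_{\kappa^\prime}} \ge \paren{\tfrac{1}{2} - \beta}\norm{\ub}_{\XXs_{\kappa^\prime}}$, which is a genuinely useful estimate precisely because the definition \eqref{eq:beta} guarantees $\beta < \tfrac{1}{2}$, so the right-hand side is strictly positive.

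The contraction property asserted in the first sentence of the corollary then follows from the same observation that $\tfrac{1}{2} + \beta < 1$, making the operator norm of $\tfrac{1}{2} Id \pm C_{i\kappa^\prime}$ with respect to $\norm{\cdot}_{\XXs_{\kappa^\prime}}$ strictly less than one. I anticipate no substantial obstacle in carrying out this argument, as Lemma~\ref{eq:contraction} has already performed the genuinely nontrivial work through the Calder\'{o}n projection identities; the corollary is a routine algebraic consequence of \eqref{eq:contraction_Cik} combined with triangle inequalities, and no further structural property of $C_{i\kappa^\prime}$ needs to be invoked.
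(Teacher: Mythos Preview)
Your proposal is correct and matches the paper's approach: the paper states that the corollary ``can be achieved as a consequence of \eqref{eq:contraction_Cik}'' without spelling out details, and your triangle/reverse-triangle inequality argument using $\beta < \tfrac{1}{2}$ is precisely the intended derivation.
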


The following $S_{i\kappa^\prime}$-coercivity of $\tfrac{1}{2} Id \pm C_{i\kappa^\prime}$ is crucial in proving the coercivity of the MFIE operator in the next section.
\begin{lemma}[$S_{i\kappa^\prime}$-coercivity]
    \label{lem:Sik_coercivity}
    For any $\kappa^\prime > 0$, there holds that
    \begin{equation}
    \label{eq:Sik_coercivity}
        \paren{\paren{\dfrac{1}{2} Id \pm C_{i\kappa^\prime}} \ub, \ub}_{\XXs_{\kappa^\prime}} \ge \paren{\dfrac{1}{4} - \beta^2} \norm{\ub}^2_{\XXs_{\kappa^\prime}}, \qq \fa \ub \in \HHs^{-1/2}_\times(\divt_\Gm, \Gm).
    \end{equation}
\end{lemma}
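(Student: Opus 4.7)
The plan is as follows. Expanding the left-hand side bilinearly gives
\[
\paren{\paren{\tfrac{1}{2}Id\pm C_{i\kappa^\prime}}\ub,\ub}_{\XXs_{\kappa^\prime}} = \tfrac{1}{2}\norm{\ub}^2_{\XXs_{\kappa^\prime}} \pm (C_{i\kappa^\prime}\ub,\ub)_{\XXs_{\kappa^\prime}},
\]
so the estimate reduces to showing that $(C_{i\kappa^\prime}\ub,\ub)_{\XXs_{\kappa^\prime}}$ is a \emph{real} number bounded in absolute value by $\beta\norm{\ub}^2_{\XXs_{\kappa^\prime}}$. The absolute value bound is immediate from the Cauchy--Schwarz inequality in $\XXs_{\kappa^\prime}$ combined with the contraction estimate \eqref{eq:contraction_Cik}, since
\[
\abs{(C_{i\kappa^\prime}\ub,\ub)_{\XXs_{\kappa^\prime}}} \le \norm{C_{i\kappa^\prime}\ub}_{\XXs_{\kappa^\prime}}\norm{\ub}_{\XXs_{\kappa^\prime}} \le \beta\norm{\ub}^2_{\XXs_{\kappa^\prime}}.
\]

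The substantive point is the reality of $(C_{i\kappa^\prime}\ub,\ub)_{\XXs_{\kappa^\prime}}$, which I would establish by showing that $C_{i\kappa^\prime}$ is self-adjoint with respect to the inner product $(\cdot,\cdot)_{\XXs_{\kappa^\prime}}$. Starting from $(C_{i\kappa^\prime}\ub,\vb)_{\XXs_{\kappa^\prime}} = \inprod{C_{i\kappa^\prime}\ub, S_{i\kappa^\prime}\ovl{\vb}}_{\times,\Gm}$, I would swap the two slots using the antisymmetry of $\inprod{\cdot,\cdot}_{\times,\Gm}$, transfer $C_{i\kappa^\prime}$ onto $\ub$ via its symmetry \eqref{eq:symmetry_Cik}, invoke the Calder\'on anti-commutation $C_{i\kappa^\prime}\circ S_{i\kappa^\prime} = -S_{i\kappa^\prime}\circ C_{i\kappa^\prime}$ already established in the proof of Lemma~\ref{eq:contraction}, and finally pull the complex conjugate past $C_{i\kappa^\prime}$ using the reality of the kernel $G_{i\kappa^\prime}$, so that $C_{i\kappa^\prime}\ovl{\vb} = \ovl{C_{i\kappa^\prime}\vb}$. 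The minus sign from antisymmetry and the one from Calder\'on cancel, producing $(C_{i\kappa^\prime}\ub,\vb)_{\XXs_{\kappa^\prime}} = (\ub, C_{i\kappa^\prime}\vb)_{\XXs_{\kappa^\prime}}$; setting $\vb=\ub$ then forces $(C_{i\kappa^\prime}\ub,\ub)_{\XXs_{\kappa^\prime}}\in\R$.

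Putting the two ingredients together yields
\[
\paren{\paren{\tfrac{1}{2}Id\pm C_{i\kappa^\prime}}\ub,\ub}_{\XXs_{\kappa^\prime}} \ge \paren{\tfrac{1}{2}-\beta}\norm{\ub}^2_{\XXs_{\kappa^\prime}},
\]
and since $\beta<\tfrac{1}{2}$ implies $\tfrac{1}{2}+\beta<1$, this lower bound dominates $(\tfrac{1}{2}-\beta)(\tfrac{1}{2}+\beta)\norm{\ub}^2_{\XXs_{\kappa^\prime}} = (\tfrac{1}{4}-\beta^2)\norm{\ub}^2_{\XXs_{\kappa^\prime}}$, which is exactly \eqref{eq:Sik_coercivity}. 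I do not foresee a real obstacle here; the only delicate part is the bookkeeping of signs in the self-adjointness calculation, where the antisymmetry of the pairing and the Calder\'on minus sign must be applied in the right order so that they cancel cleanly.
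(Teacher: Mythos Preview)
Your proof is correct and follows essentially the same path as the paper's: expand bilinearly and bound the cross term $(C_{i\kappa^\prime}\ub,\ub)_{\XXs_{\kappa^\prime}}$ via Cauchy--Schwarz and the contraction \eqref{eq:contraction_Cik}. Two minor differences worth noting: you explicitly justify that $(C_{i\kappa^\prime}\ub,\ub)_{\XXs_{\kappa^\prime}}\in\R$ via self-adjointness (the paper leaves this implicit), and you first obtain the sharper lower bound $\tfrac{1}{2}-\beta$ and then weaken it, whereas the paper reaches $\tfrac{1}{4}-\beta^2$ directly by inserting Young's inequality $\norm{C_{i\kappa^\prime}\ub}_{\XXs_{\kappa^\prime}}\norm{\ub}_{\XXs_{\kappa^\prime}}\le\tfrac{1}{4}\norm{\ub}^2_{\XXs_{\kappa^\prime}}+\norm{C_{i\kappa^\prime}\ub}^2_{\XXs_{\kappa^\prime}}$ before applying the contraction.
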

\begin{proof}
    For all $\ub \in \HHs^{-1/2}_\times(\divt_\Gm, \Gm)$, it holds that
    \begin{align*}
        \paren{\paren{\dfrac{1}{2} Id \pm C_{i\kappa^\prime}} \ub, \ub}_{\XXs_{\kappa^\prime}} & = \dfrac{1}{2} \paren{\ub, \ub}_{\XXs_{\kappa^\prime}} \pm \paren{C_{i\kappa^\prime} \ub, \ub}_{\XXs_{\kappa^\prime}} \\
        & \ge \dfrac{1}{2} \norm{\ub}^2_{\XXs_{\kappa^\prime}} - \norm{C_{i\kappa^\prime} \ub}_{\XXs_{\kappa^\prime}} \norm{\ub}_{\XXs_{\kappa^\prime}} \\
        & \ge \dfrac{1}{2} \norm{\ub}^2_{\XXs_{\kappa^\prime}} - \dfrac{1}{4} \norm{\ub}^2_{\XXs_{\kappa^\prime}} - \norm{C_{i\kappa^\prime} \ub}^2_{\XXs_{\kappa^\prime}} \\
        & \ge \paren{\dfrac{1}{4} - \beta^2} \norm{\ub}^2_{\XXs_{\kappa^\prime}}.
    \end{align*}
\end{proof}
\begin{remark}
\label{rem:eqnorm}
    The contraction properties \eqref{eq:contraction_Cik} and \eqref{eq:contraction_MFIE} remain valid when replacing $\norm{\cdot}_{\XXs_{\kappa^\prime}}$ by the equivalent norm $\norm{S^{-1}_{i\kappa^\prime}\cdot}_{\XXs_{\kappa^\prime}}$. The space $\HHs^{-1/2}_\times(\divt_\Gm, \Gm)$ equipped with this norm also defines a Hilbert space. In addition, analogous to \eqref{eq:Sik_coercivity}, the operators $\tfrac{1}{2} Id \pm C_{i\kappa^\prime}$ are also $S_{i\kappa^\prime}^{-1}$-coercive, i.e., for all $\ub \in \HHs^{-1/2}_\times(\divt_\Gm, \Gm)$, it holds that
    \[
        \inprod{\paren{S^{-1}_{i\kappa^\prime} \circ \paren{\dfrac{1}{2} Id \pm C_{i\kappa^\prime}}} \ub, \ovl{\ub}}_{\times, \Gm} \ge \paren{\dfrac{1}{4} - \beta^2} \norm{S^{-1}_{i\kappa^\prime} \ub}^2_{\XXs_{\kappa^\prime}}.
    \]
\end{remark}

\section{The magnetic field integral equation}
\label{sec:mfie}

In this section, we derive the MFIE and show the unique solvability of its continuous variational formulation. Taking the exterior Neumann trace $\gm_N^+$ of the Stratton-Chu representation formula \eqref{eq:representation} with wave number $\sm = \kappa$ gives the standard MFIE of the form
\begin{equation}
    \label{eq:mfie}
    \paren{\dfrac{1}{2} Id + C_\kappa} (\ub) = \fb,
\end{equation}
where $\ub := \gm_N^+ \eb \in \HHs^{-1/2}_\times(\divt_\Gm, \Gm)$ is the unknown exterior Neumann trace of the scattered electric field, and $\fb \in \HHs^{-1/2}_\times(\divt_\Gm, \Gm)$ is given via the Dirichlet boundary condition \eqref{eq:DBC} as
\[
    \fb := \kappa^2 S_\kappa (\gm^+_D \eb^{in}).
\]

\subsection{Flawed idea}

One may think of deriving the variational formulation of \eqref{eq:mfie} using the $\XXs_{\kappa^\prime}$-inner product, i.e.,
\begin{equation}
    \label{eq:flawed_vf}
    \inprod{\paren{\dfrac{1}{2} Id + C_\kappa} \ub, S_{i\kappa^\prime} \ovl{\vb}}_{\times, \Gm} = \inprod{\fb, S_{i\kappa^\prime} \ovl{\vb}}_{\times, \Gm}, \qqq \fa \vb \in \HHs^{-1/2}_\times(\divt_\Gm, \Gm).
\end{equation}
Due to Lemmas~\ref{lem:compact1} and \ref{lem:Sik_coercivity}, the governing MFIE operator is compact perturbation of a $\XXs_{\kappa^\prime}$-elliptic operator, for any $\kappa^\prime > 0$. Given that it is injective (i.e., Assumption~\ref{ast:wavenumber} is satisfied), the variational formulation \eqref{eq:flawed_vf} has a unique solution $\ub \in \HHs^{-1/2}_\times(\divt_\Gm, \Gm)$. Moreover, the Galerkin method converges quasi-optimally when applied to this formulation, see \cite[Theorem~4.2.9]{Sauter2011}.

Despite that, the equation \eqref{eq:flawed_vf} involves three boundary integrals rather than two as in standard variational formulations for boundary integral equations (e.g., for EFIE in \cite{HS2003b}). Computing the entries of the Galerkin matrices for \eqref{eq:flawed_vf} requires the computation of integrals over all of $\Gm$, even when the basis functions have support on small subsets of $\Gm$. This disadvantage leads the assembling time of \eqref{eq:flawed_vf} even exceeding the time for solving it, making the formulation \eqref{eq:flawed_vf} undesired in practice.

\subsection{Variational formulation}

The appropriate variational formulation of \eqref{eq:mfie} is with respect to the duality pairing $\inprod{\cdot, \cdot}_{\times, \Gm}$: find $\ub \in \HHs^{-1/2}_\times(\divt_\Gm, \Gm)$ such that 
\begin{equation}
    \label{eq:vf}
    b(\ub, \vb) = \inprod{\fb, \ovl{\vb}}_{\times, \Gm}, \qqqqq \fa \vb \in \HHs^{-1/2}_\times(\divt_\Gm, \Gm),
\end{equation}
where sesquilinear form $b : \HHs^{-1/2}_\times(\divt_\Gm, \Gm) \times \HHs^{-1/2}_\times(\divt_\Gm, \Gm) \to \C$ is defined by
\[
    b(\ub, \vb) := \inprod{\paren{\dfrac{1}{2} Id + C_\kappa} \ub, \ovl{\vb}}_{\times, \Gm}, \qqqq \ub, \vb \in \HHs^{-1/2}_\times(\divt_\Gm, \Gm).
\]

On smooth surfaces, it is well-known that the operator $C_\kappa$ is compact, see, e.g., \cite[Section~5.4]{Nedelec2001} or \cite[Lemma~11]{BH2003}. As a result, the self-duality of the space $\HHs^{-1/2}_\times(\divt_\Gm, \Gm)$ with respect to $\inprod{\cdot, \cdot}_{\times, \Gm}$ ensures the coercivity of the sesquilinear form $b(\cdot, \cdot)$. More precisely, for any $\kappa^\prime > 0$, it holds that
\begin{equation}
    \label{eq:self_dual}
    \sup_{\vb \in \XXs_{\kappa^\prime}} \dfrac{\abs{\inprod{\ub, \ovl{\vb}}_{\times, \Gm}}}{\norm{S^{-1}_{i\kappa^\prime}\vb}_{\XXs_{\kappa^\prime}}} = \norm{\ub}_{\XXs_{\kappa^\prime}}, \qqqq \fa \ub \in \HHs^{-1/2}_\times(\divt_\Gm, \Gm).
\end{equation}
To prove this identity, one just need to take into account the fact that
\[
    \abs{\inprod{\ub, \ovl{\vb}}_{\times, \Gm}} = \abs{\inprod{\ub, \paren{S_{i\kappa^\prime} \circ S^{-1}_{i\kappa^\prime}} \ovl{\vb}}_{\times, \Gm}} \le \norm{\ub}_{\XXs_{\kappa^\prime}} \norm{S^{-1}_{i\kappa^\prime} \vb}_{\XXs_{\kappa^\prime}},
\]
for all $\ub, \vb \in \HHs^{-1/2}_\times(\divt_\Gm, \Gm)$, and that the equality occurs when $\vb = S_{i\kappa^\prime} \ub$. The relation \eqref{eq:self_dual} implies that both the inf-sup and the boundedness constants (with respect to the norms specified in \eqref{eq:self_dual}) of the duality pairing $\inprod{\cdot, \cdot}_{\times, \Gm}$ are 1.

On polyhedral boundaries, the compactness of $C_\kappa$ does not necessarily hold true. In this paper, we exploit the compactness of the operator $C_\kappa - C_{i\kappa^\prime}$ (cf. Lemma~\ref{lem:compact1}) and the $S_{i\kappa^\prime}$-coercivity stated in \eqref{eq:Sik_coercivity} to prove the coercivity of the sesquilinear form $b(\cdot, \cdot)$. For that reason, we split $b(\cdot, \cdot)$ into a principal part $a(\cdot, \cdot)$ and a compact sesquilinear form $t(\cdot, \cdot)$ as follows
\[
    a(\ub, \vb) := \inprod{\paren{\dfrac{1}{2} Id + C_{i\kappa^\prime}} \ub, \ovl{\vb}}_{\times, \Gm}, \qqq t(\ub, \vb) := \inprod{\paren{C_{\kappa} - C_{i\kappa^\prime}} \ub, \ovl{\vb}}_{\times, \Gm},
\]
for some $\kappa^\prime > 0$. The $S_{i\kappa^\prime}$-coercivity of the operator $\tfrac{1}{2} Id + C_{i\kappa^\prime}$ (cf. Lemma~\ref{lem:Sik_coercivity}) can be reformulated in terms of the sesquilinear form $a(\cdot, \cdot)$ as
\begin{equation}
\label{eq:coercivity_a}
    a(\ub, S_{i\kappa^\prime} \ub) \ge \paren{\dfrac{1}{4} - \beta^2} \norm{\ub}^2_{\XXs_{\kappa^\prime}}, \qqqq \fa \ub \in \HHs^{-1/2}_\times(\divt_\Gm, \Gm).
\end{equation}
The following generalized G{\aa}rding inequality for $b(\cdot, \cdot)$ is an immediate consequence of the inequality \eqref{eq:coercivity_a}.

\begin{theorem}[Generalized G{\aa}rding inequality]
    \label{lem:coercivity}
    For any $\kappa, \kappa^\prime > 0$, it holds that 
    \begin{equation}
    \label{eq:Garding}
        \abs{b(\ub, S_{i\kappa^\prime} \ub) - t(\ub, S_{i\kappa^\prime} \ub)} \ge  \paren{\dfrac{1}{4} - \beta^2} \norm{\ub}^2_{\XXs_{\kappa^\prime}}, \qq \fa \ub \in \HHs^{-1/2}_\times(\divt_\Gm, \Gm).
    \end{equation}
\end{theorem}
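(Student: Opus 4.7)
The plan is to recognise that the claimed estimate is essentially a direct repackaging of the coercivity inequality \eqref{eq:coercivity_a} already established at the end of the previous section. My first step would be to observe that, by the very definitions of $b$, $t$, and $a$ given just before the statement, the operator $C_\kappa$ cancels in the difference $b - t$, yielding
\[
    b(\ub, \vb) - t(\ub, \vb) = \inprod{\paren{\tfrac{1}{2}Id + C_{i\kappa^\prime}}\ub, \ovl{\vb}}_{\times, \Gm} = a(\ub, \vb)
\]
for every $\ub, \vb \in \HHs^{-1/2}_\times(\divt_\Gm, \Gm)$. Specialising to $\vb = S_{i\kappa^\prime} \ub$ then reduces the task to lower-bounding $\abs{a(\ub, S_{i\kappa^\prime}\ub)}$.

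For this I would simply invoke the reformulated $S_{i\kappa^\prime}$-coercivity \eqref{eq:coercivity_a}, which asserts
\[
    a(\ub, S_{i\kappa^\prime}\ub) \ge \paren{\tfrac{1}{4} - \beta^2} \norm{\ub}^2_{\XXs_{\kappa^\prime}}.
\]
Because the definition of $\beta$ in \eqref{eq:beta} guarantees $\beta < \tfrac{1}{2}$, the prefactor $\tfrac{1}{4} - \beta^2$ is strictly positive, so $a(\ub, S_{i\kappa^\prime}\ub)$ is a non-negative real number that coincides with its own modulus. The absolute-value bars in \eqref{eq:Garding} can therefore be dropped, and the desired inequality follows at once.

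There is essentially no obstacle here — the theorem is a cosmetic restatement that isolates the principal part $a$ of $b$ from the compactly perturbing part $t$, in a form suitable for the subsequent Fredholm-type argument on the full sesquilinear form $b$. The only subtlety worth recording is that identifying $a(\ub, S_{i\kappa^\prime}\ub)$ with the $\XXs_{\kappa^\prime}$-inner product $\paren{\paren{\tfrac{1}{2}Id + C_{i\kappa^\prime}}\ub, \ub}_{\XXs_{\kappa^\prime}}$, tacitly used in passing from \eqref{eq:Sik_coercivity} to \eqref{eq:coercivity_a}, relies on the real-valuedness of the Yukawa kernel $G_{i\kappa^\prime}$, so that complex conjugation commutes with $S_{i\kappa^\prime}$; once this is in hand, Lemma~\ref{lem:Sik_coercivity} transfers verbatim to the sesquilinear form $a$.
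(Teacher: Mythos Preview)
Your proposal is correct and follows exactly the paper's approach: the paper states that the theorem is ``an immediate consequence of the inequality \eqref{eq:coercivity_a}'', and your argument makes this explicit by observing $b - t = a$ and then invoking \eqref{eq:coercivity_a}. Your remark on the real-valuedness of the Yukawa kernel is a helpful clarification of the passage from \eqref{eq:Sik_coercivity} to \eqref{eq:coercivity_a}, but is not strictly needed once \eqref{eq:coercivity_a} is taken as given.
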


Finally, the unique solvability of the variational problem \eqref{eq:vf} follows from the injectivity of the MFIE (i.e., Assumption~\ref{ast:wavenumber}) and the generalized G{\aa}rding inequality \eqref{eq:Garding}, by means of a Fredholm alternative argument (cf. \cite[Proposition~3]{BCS2002}).
\begin{corollary}
    Let Assumption~\ref{ast:wavenumber} be satisfied. Then, there exists a unique $\ub \in \HHs^{-1/2}_{\times}(\divt_\Gm, \Gm)$ that solves \eqref{eq:vf}. 
\end{corollary}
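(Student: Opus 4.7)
The plan is to invoke the Fredholm alternative, in the spirit of \cite[Proposition~3]{BCS2002}. The self-duality of $\HHs^{-1/2}_\times(\divt_\Gm, \Gm)$ with respect to $\inprod{\cdot, \cdot}_{\times, \Gm}$ (Theorem~\ref{thm:duality}) allows one to recast \eqref{eq:vf} as an operator equation $B\ub = \fb$, with $B = \tfrac{1}{2}Id + C_\kappa$ acting on $\HHs^{-1/2}_\times(\divt_\Gm, \Gm)$. I would split this operator as $B = A + T$, where $A := \tfrac{1}{2}Id + C_{i\kappa^\prime}$ and $T := C_\kappa - C_{i\kappa^\prime}$ for an arbitrary but fixed $\kappa^\prime > 0$.

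I would then establish that $A$ is a linear homeomorphism and $T$ is compact. Compactness of $T$ is exactly Lemma~\ref{lem:compact1}. For $A$, the key observation is that Lemma~\ref{lem:Sik_coercivity} furnishes diagonal coercivity of $A$ on the Hilbert space $\XXs_{\kappa^\prime}$, whose underlying set is $\HHs^{-1/2}_\times(\divt_\Gm, \Gm)$ and whose norm is equivalent to $\norm{\cdot}_{\HHs^{-1/2}_\times(\divt_\Gm, \Gm)}$ by Lemma~\ref{lem:ellipticity}. Together with the continuity of $A$, the Lax--Milgram theorem for sesquilinear forms yields a bounded inverse $A^{-1}$. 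Writing $B = A(Id + A^{-1}T)$ now exhibits $B$ as the composition of a linear isomorphism with a compact perturbation of the identity, so $B$ is a Fredholm operator of index $0$.

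Finally, Assumption~\ref{ast:wavenumber} rules out interior Maxwell resonances, which is the standard mechanism producing nontrivial solutions of the homogeneous MFIE: a solution of $B\ub = 0$ generates, through $\Psib^\kappa_{SL}(\ub)$ and the jump relations \eqref{eq:jump}, an interior Maxwell field with vanishing Neumann trace and an exterior Maxwell field with vanishing Dirichlet trace, both of which must vanish under the assumption; the no-jump property of the Dirichlet trace of $\Psib^\kappa_{SL}(\ub)$ then forces $\ub = -[\gm_N]_\Gm \Psib^\kappa_{SL}(\ub) = 0$. Hence $B$ is injective, and the Fredholm alternative upgrades this to bijectivity, giving the asserted unique solvability of \eqref{eq:vf}.

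The principal subtlety is the mismatch between the generalized G\aa{}rding inequality \eqref{eq:Garding}, which involves the modified test direction $S_{i\kappa^\prime}\ub$, and the usual Fredholm framework phrased directly in terms of operators. The remedy, already prefigured in Lemmas~\ref{lem:compact1} and~\ref{lem:Sik_coercivity}, is to introduce the auxiliary imaginary wave number $i\kappa^\prime$: it is precisely this substitution that isolates a Hilbert-space-coercive principal part $A$ from a compact remainder $T$, after which the Fredholm argument proceeds routinely. The only other delicate point is the injectivity step, where one has to combine both Dirichlet and Neumann eigenvalue exclusions encoded in Assumption~\ref{ast:wavenumber}.
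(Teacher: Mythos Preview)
Your proposal is correct and follows essentially the same route as the paper, which merely states that the corollary follows from the generalized G\aa{}rding inequality \eqref{eq:Garding}, the injectivity encoded in Assumption~\ref{ast:wavenumber}, and a Fredholm alternative argument \`a la \cite[Proposition~3]{BCS2002}. Your splitting $B=A+T$ with $A$ coercive on $\XXs_{\kappa'}$ (Lemma~\ref{lem:Sik_coercivity}) and $T$ compact (Lemma~\ref{lem:compact1}) is exactly the operator-level translation of that G\aa{}rding inequality; one minor imprecision is that the injectivity step for $\tfrac{1}{2}Id+C_\kappa$ only requires the interior \emph{Neumann} eigenvalue exclusion (the exterior Dirichlet problem is always uniquely solvable by Rellich's lemma), not both exclusions simultaneously.
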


\section{Galerkin discretization}
\label{sec:discretization}

In this section, we analyze the Galerkin boundary element discretization scheme for the variational formulation \eqref{eq:vf} that was firstly introduced in \cite{CAD+2011}. 

\subsection{Galerkin boundary element discretization}

Let $(\Gm_h)_{h > 0}$ be a family of shape-regular, quasi-uniform triangulations of the polyhedral surface $\Gm$ \cite{Ciarlet2002}. The parameter $h$ stands for the meshwidth, which is the length of the longest edge of triangulation $\Gm_h$. We denote by $\TT_h$ and $\EE_h$, respectively, the sets of all triangles and edges of $\Gm_h$. On each triangle $T \in \TT_h$, we equip the lowest-order triangular Raviart-Thomas space \cite{RT1977}
\[
    \RT_0(T) := \brac{\xb \mapsto \ab + b \xb : \ab \in \C^2, b \in \C}.
\]
This local space gives rise to the global $\divt_\Gm$-conforming boundary element space
\[
    \UUs_h := \brac{\ub_h \in \HHs^{-1/2}_{\times}(\divt_\Gm, \Gm) : \left.\ub_h\right|_T \in \RT_0(T), \, \fa T \in \TT_h},
\]
which is endowed with the edge degrees of freedom \cite{HS2003b}
\[
    \phi_e(\ub_h) := \int_e (\ub_h \times \nv_j) \cdot \ds,
\]
for all $e \in \EE_h$, where $\nv_j$ is the normal of a face $\Gm_j$ in whose the closure of $e$ is contained. The boundary element space $\UUs_h$ is also called the Rao-Wilton-Glisson space \cite{RWG1982}.

The Raviart-Thomas boundary elements have been widely used in the Galerkin discretization of the EFIE and CFIEs, both for the solution space and the test space, see, e.g., \cite{HS2003b,BH2005}. These discretization schemes are primarily based on the ellipticity of the operator $S_\kappa$. Unfortunately, this method fails when applied to the MFIE, even on smooth surfaces. It has been shown in \cite[Section~3.1]{CN2002} that there exist $C > 0$ and $\WWs_h \sst \UUs_h$ such that
\[
    \sup_{\ub_h \in \UUs_h} \sup_{\wb_h \in \WWs_h} \dfrac{\abs{\inprod{\ub_h, \ovl{\wb_h}}_{\times, \Gm}}}{\norm{\ub_h}_{\XXs_{\kappa^\prime}} \norm{S^{-1}_{i\kappa^\prime} \wb_h}_{\XXs_{\kappa^\prime}}} \le C h^{-1/2}.
\]
In other words, the subspace $\UUs_h$ is not dual to itself with respect to the pairing $\inprod{\cdot, \cdot}_{\times, \Gm}$.
In fact, there is ample numerical evidence that for a very broad class of situations that are of practical interest, the discretization matrix of the anti-symmetric pairing $\inprod{\cdot, \cdot}_{\times, \Gm}$ using the boundary element space $\UUs_h$ is singular, with a nullspace whose dimension is a sizeable fraction of the dimension of $\UUs_h$ \cite{CAO+2009b}.

In order to preserve the inf-sup condition for the Galerkin discretization of $\inprod{\cdot, \cdot}_{\times, \Gm}$, the authors of \cite{CAD+2011} proposed the test space $\VVs_h \sst \HHs^{-1/2}_{\times}(\divt_\Gm, \Gm)$ consisting of the Buffa-Christiansen boundary elements \cite{BC2007} (see Figure~\ref{fig:BC_function}). More particularly, $\dim \VVs_h = \dim \UUs_h$ and the following duality between $\UUs_h$ and $\VVs_h$ holds 
\begin{equation}
    \label{eq:discrete_dual}
    \inf_{\ub_h \in \UUs_h} \sup_{\vb_h \in \VVs_h} \dfrac{\abs{\inprod{\ub_h, \ovl{\vb_h}}_{\times, \Gm}}}{\norm{\ub_h}_{\XXs_{\kappa^\prime}} \norm{S^{-1}_{i\kappa^\prime}\vb_h}_{\XXs_{\kappa^\prime}}} \ge \alpha,
\end{equation}
for some constant $\alpha(\kappa^\prime, \Gm) > 0$ independent of $h$, with $\kappa^\prime > 0$. Obviously, $\alpha \le 1$ due to \eqref{eq:self_dual}. In what follows, $\alpha$ represents the maximal value that still fulfills \eqref{eq:discrete_dual}. Then, the Petrov-Galerkin boundary element discretization of the variational formulation \eqref{eq:vf} reads as: find $\ub_h \in \UUs_h$ such that
\begin{equation}
\label{eq:dis_vf}
    b(\ub_h, \vb_h) = \inprod{\fb, \ovl{\vb_h}}_{\times, \Gm}, \qqqqq \fa \vb_h \in \VVs_h.
\end{equation}

\begin{figure}
    \centering
    \begin{tikzpicture}
      \coordinate (A) at (0, 0);
      \coordinate (B) at (3, 0);
      \coordinate (C) at (1.7, 3.1);
      \coordinate (D) at (2.4, -2.9);
      \coordinate (E) at (-3, -0.3);
      \coordinate (F) at (-1.1, -2.8);
      \coordinate (G) at (-1.7, 2.5);
      \coordinate (H) at (5.2, 1.8);
      \coordinate (K) at (5.5, -1.5);

      \coordinate (AB) at (1.5, 0);
      \coordinate (BC) at (2.35, 1.55);
      \coordinate (AC) at (0.85, 1.55);
      \coordinate (AG) at (-0.85, 1.25);
      \coordinate (AD) at (1.2, -1.45);
      \coordinate (AE) at (-1.5, -0.15);
      \coordinate (AF) at (-0.55, -1.4);
      \coordinate (BD) at (2.7, -1.45);
      \coordinate (BH) at (4.1, 0.9);
      \coordinate (BK) at (4.25, -0.75);
      \coordinate (EF) at (-2.05, -1.55);
      \coordinate (EG) at (-2.35, 1.1);
      \coordinate (DF) at (0.65, -2.85);
      \coordinate (CG) at (0, 2.8);
      \coordinate (CH) at (3.45, 2.45);
      \coordinate (DK) at (3.95, -2.2);
      \coordinate (HK) at (5.35, 0.15);

      \coordinate (BC2) at (7.7/3, 3.1/3);
      \coordinate (AC2) at (1.7/3, 3.1/3);
      \coordinate (AG2) at (-1.7/3, 2.5/3);
      \coordinate (AD2) at (0.8, -2.9/3);
      \coordinate (AE2) at (-1, -0.1);
      \coordinate (AF2) at (-1.1/3, -2.8/3);
      \coordinate (BD2) at (2.8, -2.9/3);
      \coordinate (BH2) at (11.2/3, 0.6);
      \coordinate (BK2) at (11.5/3, -0.5);

      \coordinate (ABC) at (4.7/3, 3.1/3);
      \coordinate (ABD) at (1.8, -2.9/3);
      \coordinate (ACG) at (0, 5.6/3);
      \coordinate (AEG) at (-4.7/3, 2.2/3);
      \coordinate (AEF) at (-4.1/3, -3.1/3);
      \coordinate (ADF) at (1.3/3, -1.9);
      \coordinate (BCH) at (3.3, 4.9/3);
      \coordinate (BDK) at (10.9/3, -4.4/3);
      \coordinate (BHK) at (13.7/3, 0.1);

      \coordinate (AEG2) at (-9.4/9, 4.4/9);
      \coordinate (AEF2) at (-8.2/9, -6.2/9);
      \coordinate (ACG2) at (0, 11.2/9);
      \coordinate (ADF2) at (2.6/9, -3.8/3);
      \coordinate (ABC2) at (18.2/12, 3.1/12);
      \coordinate (ABC3) at (9.4/9, 6.2/9);
      \coordinate (ABC4) at (18.4/9, 6.2/9);
      \coordinate (ABD2) at (1.575, -2.9/12);
      \coordinate (ABD3) at (1.2, -5.8/9);
      \coordinate (ABD4) at (2.2, -5.8/9);
      \coordinate (BCH2) at (3.2, 9.8/9);
      \coordinate (BDK2) at (30.8/9, -8.8/9);
      \coordinate (BHK2) at (36.4/9, 0.2/3);

      \draw[line width=0.1mm, black!40, fill=black!20] (AE) -- (AEF) -- (AF) -- (ADF) -- (AD) -- (ABD) -- (BD) -- (BDK) -- (BK) -- (BHK) -- (BH) -- (BCH) -- (BC) -- (ABC) -- (AC) -- (ACG) -- (AG) -- (AEG) -- (AE);

     \draw[line width=0.1mm, black!40] (E) -- (AF) -- (D) -- (BK) -- (H) -- (BC) -- (A) -- (BD) -- (K) -- (BH) -- (C) -- (AG) -- (E);
      \draw[line width=0.1mm, black!40] (G) -- (AC) -- (B) -- (HK);
      \draw[line width=0.1mm, black!40] (G) -- (AE) -- (F) -- (AD) -- (B) -- (CH);
      \draw[line width=0.1mm, black!40] (EG) -- (A) -- (EF);
      \draw[line width=0.1mm, black!40] (CG) -- (A) -- (DF);
      \draw[line width=0.1mm, black!40] (C) -- (AB) -- (D);
      \draw[line width=0.1mm, black!40] (B) -- (DK);
      
      \draw[line width=0.5mm] (E) -- (F) -- (D) -- (K) -- (H) -- (C) -- (G) -- (E) -- (A) -- (B) -- (C) -- (A) -- (D) -- (B);
      \draw[line width=0.5mm] (G) -- (A) -- (F);
      \draw[line width=0.5mm] (H) -- (B) -- (K);
        
      \node[inner sep=2.5pt, draw, fill=white] at (A) {\footnotesize 1/12};
      \node[inner sep=2.5pt, draw, fill=white] at (B) {\footnotesize 1/10};
      \node[inner sep=2.5pt, draw, fill=white] at (AE2) {\footnotesize 0};
      \node[inner sep=2.5pt, draw, fill=white] at (AG2) {\footnotesize 2};
      \node[inner sep=2.5pt, draw, fill=white] at (AC2) {\footnotesize 4};
      \node[inner sep=2.5pt, draw, fill=white] at (AF2) {\footnotesize -2};
      \node[inner sep=2.5pt, draw, fill=white] at (AD2) {\footnotesize -4};
      \node[inner sep=2.5pt, draw, fill=white] at (BH2) {\footnotesize 1};
      \node[inner sep=2.5pt, draw, fill=white] at (BK2) {\footnotesize -1};
      \node[inner sep=2.5pt, draw, fill=white] at (BC2) {\footnotesize 3};
      \node[inner sep=2.5pt, draw, fill=white] at (BD2) {\footnotesize -3};
      \node[inner sep=2.5pt, draw, fill=white] at (AEG2) {\footnotesize 1};
      \node[inner sep=2.5pt, draw, fill=white] at (AEF2) {\footnotesize -1};
      \node[inner sep=2.5pt, draw, fill=white] at (ACG2) {\footnotesize 3};
      \node[inner sep=2.5pt, draw, fill=white] at (ADF2) {\footnotesize -3};
      \node[inner sep=2.5pt, draw, fill=white] at (BCH2) {\footnotesize 2};
      \node[inner sep=2.5pt, draw, fill=white] at (BHK2) {\footnotesize 0};
      \node[inner sep=2.5pt, draw, fill=white] at (BDK2) {\footnotesize -2};
      \node[inner sep=1.5pt, draw, fill=white] at (ABC2) {\scriptsize 1/2};
      \node[inner sep=2.5pt, draw, fill=white] at (ABC3) {\footnotesize 5};
      \node[inner sep=2.5pt, draw, fill=white] at (ABC4) {\footnotesize 4};
      \node[inner sep=1.5pt, draw, fill=white] at (ABD2) {\scriptsize -1/2};
      \node[inner sep=2.5pt, draw, fill=white] at (ABD3) {\footnotesize -5};
      \node[inner sep=2.5pt, draw, fill=white] at (ABD4) {\footnotesize -4};
    \end{tikzpicture}
    \caption{A Buffa-Christiansen basis element expressed as a linear combination of Raviart-Thomas basis elements on the barycentric refinement $\wt{\Gm}_h$ of the triangulation $\Gm_h$, with coefficient of each edge multiplied by the number indicated at its origin. The edges of the refinement $\wt{\Gm}_h$ are oriented away from the central edge.}
    \label{fig:BC_function}
\end{figure}
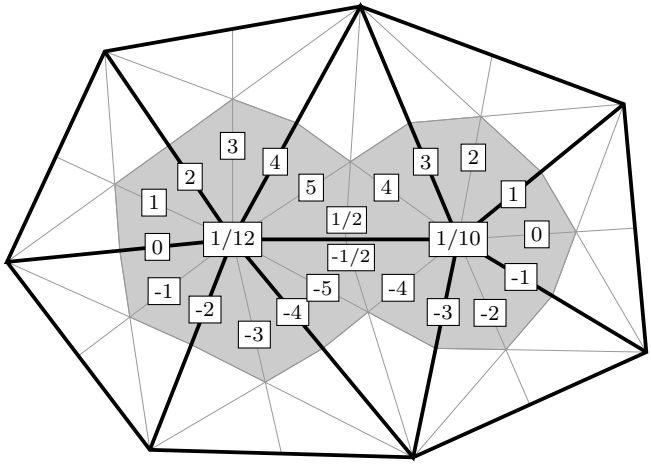

The following lemma presents the discrete inf-sup condition for the sesquilinear form $a(\cdot, \cdot)$ restricted on $\UUs_h \times \VVs_h$. This condition is a weaker discrete counterpart of the $S_{i\kappa^\prime}$-coercivity \eqref{eq:coercivity_a}.

\begin{lemma}
\label{lem:coercivity_a_h}
    For any $\kappa^\prime > 0$, the following inf-sup condition is satisfied
    \begin{equation}
    \label{eq:semi_elliptic_h}
        \inf_{\ub_h \in \UUs_h} \sup_{\vb_h \in \VVs_h} \dfrac{\abs{a(\ub_h, \vb_h)}}{\norm{\ub_h}_{\XXs_{\kappa^\prime}} \norm{S^{-1}_{i\kappa^\prime} \vb_h}_{\XXs_{\kappa^\prime}}} \ge \dfrac{\alpha}{2} - \beta.
    \end{equation}
\end{lemma}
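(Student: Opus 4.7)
The plan is to reduce the inf-sup inequality for $a(\cdot,\cdot)$ to the discrete duality estimate \eqref{eq:discrete_dual} on the identity part combined with the contraction estimate \eqref{eq:contraction_Cik} on the $C_{i\kappa^\prime}$ part. Concretely, for any $\ub_h \in \UUs_h$ I would decompose
\[
    a(\ub_h, \vb_h) \;=\; \tfrac{1}{2}\inprod{\ub_h, \ovl{\vb_h}}_{\times, \Gm} + \inprod{C_{i\kappa^\prime} \ub_h, \ovl{\vb_h}}_{\times, \Gm}, \qquad \vb_h \in \VVs_h,
\]
and select a nearly optimal $\vb_h^\ast \in \VVs_h$ with respect to the first summand. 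A reverse triangle inequality would then give a lower bound on $|a(\ub_h, \vb_h^\ast)|$ with constant $\tfrac{\alpha}{2} - \beta$.

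First, I would invoke \eqref{eq:discrete_dual}: for $\ub_h \in \UUs_h$ (and any $\epsilon > 0$ if one prefers) there exists $\vb_h^\ast \in \VVs_h$ such that
\[
    \abs{\inprod{\ub_h, \ovl{\vb_h^\ast}}_{\times, \Gm}} \;\ge\; \alpha \norm{\ub_h}_{\XXs_{\kappa^\prime}} \norm{S^{-1}_{i\kappa^\prime} \vb_h^\ast}_{\XXs_{\kappa^\prime}}.
\]
For the compact part I would use that \eqref{eq:self_dual} is actually a continuous Cauchy--Schwarz-type bound valid for all $\ub, \vb \in \HHs^{-1/2}_{\times}(\divt_\Gm, \Gm)$ (not only for optimal pairings), applied to $C_{i\kappa^\prime}\ub_h$ and $\vb_h^\ast$, and then combine with the contraction property \eqref{eq:contraction_Cik}:
\[
    \abs{\inprod{C_{i\kappa^\prime} \ub_h, \ovl{\vb_h^\ast}}_{\times, \Gm}} \;\le\; \norm{C_{i\kappa^\prime} \ub_h}_{\XXs_{\kappa^\prime}} \norm{S^{-1}_{i\kappa^\prime} \vb_h^\ast}_{\XXs_{\kappa^\prime}} \;\le\; \beta \norm{\ub_h}_{\XXs_{\kappa^\prime}} \norm{S^{-1}_{i\kappa^\prime} \vb_h^\ast}_{\XXs_{\kappa^\prime}}.
\]
The reverse triangle inequality applied to the decomposition yields
\[
    \abs{a(\ub_h, \vb_h^\ast)} \;\ge\; \left(\tfrac{\alpha}{2} - \beta\right) \norm{\ub_h}_{\XXs_{\kappa^\prime}} \norm{S^{-1}_{i\kappa^\prime} \vb_h^\ast}_{\XXs_{\kappa^\prime}},
\]
and dividing by the product of norms and taking the supremum over $\VVs_h$ (followed by the infimum over $\UUs_h$) gives \eqref{eq:semi_elliptic_h}.

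There is no real obstacle: the only subtle point is ensuring that the norms appearing in the discrete pairing estimate \eqref{eq:discrete_dual} and in the continuous pairing bound used for the $C_{i\kappa^\prime}$-term are exactly the same ($\norm{\cdot}_{\XXs_{\kappa^\prime}}$ on the trial side and $\norm{S^{-1}_{i\kappa^\prime}\cdot}_{\XXs_{\kappa^\prime}}$ on the test side), so that a single choice of $\vb_h^\ast$ simultaneously bounds the identity part from below and the compact part from above. The clean identification relies on the elementary fact (noted after \eqref{eq:self_dual}) that $|\inprod{\ub,\ovl{\vb}}_{\times,\Gm}| = |(\ub, S^{-1}_{i\kappa^\prime}\vb)_{\XXs_{\kappa^\prime}}|$, after which the whole estimate reduces to Cauchy--Schwarz in the Hilbert space $\XXs_{\kappa^\prime}$ combined with \eqref{eq:contraction_Cik}. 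Note also that the resulting lower bound $\tfrac{\alpha}{2} - \beta$ is meaningful only under the stability hypothesis $\alpha > 2\beta$, which foreshadows the mild geometric assumption invoked later for the full inf-sup analysis of $b(\cdot,\cdot)$.
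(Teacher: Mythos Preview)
Your proposal is correct and follows essentially the same approach as the paper: split $a(\ub_h,\vb_h)$ into the identity part and the $C_{i\kappa^\prime}$-part, bound the former from below via the discrete duality \eqref{eq:discrete_dual} and the latter from above via the continuous pairing bound \eqref{eq:self_dual} together with the contraction \eqref{eq:contraction_Cik}. The paper writes this directly at the level of inf--sup quotients, whereas you make the choice of a near-optimal $\vb_h^\ast$ explicit, but the arguments are the same.
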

\begin{proof}
    The following estimate holds true due to the discrete inf-sup condition \eqref{eq:discrete_dual} and the contraction property \eqref{eq:contraction_Cik}
    \begin{align*}
        \inf_{\ub_h \in \UUs_h} \sup_{\vb_h \in \VVs_h} \dfrac{\abs{a(\ub_h, \vb_h)}}{\norm{\ub_h}_{\XXs_{\kappa^\prime}} \norm{S^{-1}_{i\kappa^\prime} \vb_h}_{\XXs_{\kappa^\prime}}} & \ge \dfrac{1}{2} \inf_{\ub_h \in \UUs_h} \sup_{\vb_h \in \VVs_h} \dfrac{\abs{\inprod{\ub_h, \ovl\vb_h}_{\times, \Gm}}}{\norm{\ub_h}_{\XXs_{\kappa^\prime}} \norm{S^{-1}_{i\kappa^\prime} \vb_h}_{\XXs_{\kappa^\prime}}} \\
        & \q - \sup_{\ub_h \in \UUs_h} \sup_{\vb_h \in \VVs_h} \dfrac{\abs{\inprod{C_{i\kappa^\prime} \ub_h, \ovl\vb_h}_{\times, \Gm}}}{\norm{\ub_h}_{\XXs_{\kappa^\prime}} \norm{S^{-1}_{i\kappa^\prime} \vb_h}_{\XXs_{\kappa^\prime}}} \\
        & \ge \dfrac{\alpha}{2} - \sup_{\ub_h \in \UUs_h} \dfrac{\norm{C_{i\kappa^\prime} \ub_h}_{\XXs_{\kappa^\prime}}}{\norm{\ub_h}_{\XXs_{\kappa^\prime}}} \ge \dfrac{\alpha}{2} - \beta.
    \end{align*}
\end{proof}

The constant $\tfrac{\alpha}{2} - \beta$ is only dependent of $\kappa^\prime$ and $\Gm$, but independent of $h$. Moreover, it can be non-positive. In order to ensure that the inf-sup condition \eqref{eq:semi_elliptic_h} is satisfied with a positive constant, we make the following assumption.
\begin{assumption}[Stability condition]
\label{ast:inf_sup_const}
    There is a real number $\kappa^\prime_0 > 0$ such that
    \begin{equation}
        \label{eq:condition}
        \alpha(\kappa^\prime_0, \Gm) > 2 \beta(\kappa^\prime_0, \Gm),
    \end{equation}
    where $\alpha$ and $\beta$ are defined by \eqref{eq:discrete_dual} and \eqref{eq:beta}, respectively.
\end{assumption}

Now, we shall show the discrete inf-sup condition for the sesquilinear form $b : \UUs_h \times \VVs_h \to \C$, which is crucial in establishing the unique solvability of the discrete variational problem \eqref{eq:dis_vf}. The next result can be proven by following the hints in \cite[Exercise~26.5]{Ern2021b}, with the approximability of $\UUs_h$ and $\VVs_h$ in $\HHs^{-1/2}_{\times}(\divt_\Gm, \Gm)$ stated in \cite[Corollary~5]{BH2003} and \cite{BC2007}, respectively. We leave the complete proof for the reader.
\begin{lemma}
\label{lem:dis_inf_sup}
    Let $\kappa$ and $\kappa^\prime_0$ fulfill Assumptions~\ref{ast:wavenumber} and \ref{ast:inf_sup_const}, respectively. Then, there exist an $h_0 > 0$ and a constant $\gamma > 0$ such that for all $h < h_0$
    \begin{equation}
    \label{eq:dis_inf_sup}
        \inf_{\ub_h \in \UUs_h} \sup_{\vb_h \in \VVs_h} \dfrac{\abs{b(\ub_h, \vb_h)}}{\norm{\ub_h}_{\XXs_{\kappa_0^\prime}} \norm{S^{-1}_{i\kappa_0^\prime} \vb_h}_{\XXs_{\kappa_0^\prime}}} \ge \gamma.
    \end{equation}
\end{lemma}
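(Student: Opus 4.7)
My plan is a standard Banach-Ne\v{c}as-Babu\v{s}ka argument by contradiction (following the hint \cite[Exercise~26.5]{Ern2021b}), which transfers the uniform discrete inf-sup condition \eqref{eq:semi_elliptic_h} of the principal part $a(\cdot, \cdot)$ to the full sesquilinear form $b = a + t$, where the perturbation
\[
    t(\ub, \vb) := \inprod{\paren{C_\kappa - C_{i\kappa_0^\prime}} \ub, \ovl{\vb}}_{\times, \Gm}
\]
is compact by \cref{lem:compact1}. Three facts drive the argument: the positive, $h$-independent lower bound $\tfrac{\alpha}{2} - \beta > 0$ of \cref{lem:coercivity_a_h} under \cref{ast:inf_sup_const}; the injectivity of $b(\cdot, \cdot)$ on the continuous level granted by \cref{ast:wavenumber} through \cref{lem:coercivity}; and the density of $\bigcup_h \UUs_h$ and $\bigcup_h \VVs_h$ in $\HHs^{-1/2}_\times(\divt_\Gm, \Gm)$, supplied by \cite[Corollary~5]{BH2003} and \cite{BC2007}.

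Assume for contradiction that \eqref{eq:dis_inf_sup} fails. Then there exist $h_n \to 0$ and $\ub_n \in \UUs_{h_n}$ with $\norm{\ub_n}_{\XXs_{\kappa_0^\prime}} = 1$ such that
\[
    \eta_n := \sup_{\vb_n \in \VVs_{h_n}} \dfrac{\abs{b(\ub_n, \vb_n)}}{\norm{S^{-1}_{i\kappa_0^\prime} \vb_n}_{\XXs_{\kappa_0^\prime}}} \to 0.
\]
Reflexivity of $\HHs^{-1/2}_\times(\divt_\Gm, \Gm)$ extracts a subsequence with $\ub_n \wconv \ub$ weakly, and \cref{lem:compact1} upgrades this to norm convergence $\paren{C_\kappa - C_{i\kappa_0^\prime}} \ub_n \to \paren{C_\kappa - C_{i\kappa_0^\prime}} \ub$. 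To identify the limit, I would fix $\vb \in \HHs^{-1/2}_\times(\divt_\Gm, \Gm)$, pick $\vb_n \in \VVs_{h_n}$ with $\vb_n \to \vb$ by density, and use the decomposition
\[
    b(\ub_n, \vb_n) - b(\ub, \vb) = b(\ub_n - \ub, \vb) + b(\ub_n, \vb_n - \vb)
\]
together with the boundedness of $b$, the weak convergence of $\ub_n$, and the strong convergence of $\vb_n$ to pass to the limit, getting $b(\ub_n, \vb_n) \to b(\ub, \vb)$. Since the denominator $\norm{S^{-1}_{i\kappa_0^\prime}\vb_n}_{\XXs_{\kappa_0^\prime}}$ stays bounded, $\abs{b(\ub_n, \vb_n)} \le C \eta_n \to 0$; hence $b(\ub, \vb) = 0$ for every $\vb$, and \cref{ast:wavenumber} forces $\ub = 0$.

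To close, I apply \cref{lem:coercivity_a_h} to $a = b - t$:
\[
    \dfrac{\alpha}{2} - \beta \le \sup_{\vb_n \in \VVs_{h_n}} \dfrac{\abs{a(\ub_n, \vb_n)}}{\norm{S^{-1}_{i\kappa_0^\prime} \vb_n}_{\XXs_{\kappa_0^\prime}}} \le \eta_n + \sup_{\vb_n \in \VVs_{h_n}} \dfrac{\abs{t(\ub_n, \vb_n)}}{\norm{S^{-1}_{i\kappa_0^\prime} \vb_n}_{\XXs_{\kappa_0^\prime}}}.
\]
Rewriting $\inprod{(C_\kappa - C_{i\kappa_0^\prime})\ub_n, \ovl{\vb_n}}_{\times, \Gm} = \paren{(C_\kappa - C_{i\kappa_0^\prime})\ub_n, S^{-1}_{i\kappa_0^\prime}\vb_n}_{\XXs_{\kappa_0^\prime}}$ and applying Cauchy-Schwarz in $\XXs_{\kappa_0^\prime}$ bound the second supremum by $\norm{(C_\kappa - C_{i\kappa_0^\prime})\ub_n}_{\XXs_{\kappa_0^\prime}}$, which vanishes as $n \to \infty$ since the limit $\ub = 0$. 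Combined with $\eta_n \to 0$, this yields $\tfrac{\alpha}{2} - \beta \le 0$, contradicting \cref{ast:inf_sup_const}.

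The principal obstacle lies in the weak-limit identification step: one must ensure that the approximability of $\VVs_h$ from \cite{BC2007} is valid in the full graph norm of $\HHs^{-1/2}_\times(\divt_\Gm, \Gm)$ (equivalently in $\norm{S^{-1}_{i\kappa_0^\prime}\cdot}_{\XXs_{\kappa_0^\prime}}$ through \cref{rem:eqnorm}), so that the remainder $b(\ub_n, \vb_n - \vb)$ really vanishes uniformly in the bounded sequence $\ub_n$. Beyond this, the bookkeeping between the two equivalent test-side norms must be kept consistent so that the constants propagated through the contradiction remain finite and $h$-independent; everything else is routine.
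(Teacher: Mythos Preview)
Your proposal is correct and follows precisely the approach the paper points to: the paper itself does not spell out a proof but refers to \cite[Exercise~26.5]{Ern2021b} together with the approximability results \cite[Corollary~5]{BH2003} and \cite{BC2007}, which is exactly the contradiction argument you carry out. The ingredients you identify---the uniform discrete inf-sup for $a(\cdot,\cdot)$ from \cref{lem:coercivity_a_h} under \cref{ast:inf_sup_const}, the compactness of $C_\kappa - C_{i\kappa_0^\prime}$, the continuous injectivity from \cref{ast:wavenumber}, and density of the trial and test spaces---are the same ones the paper invokes, and your handling of the equivalent test-side norms via \cref{rem:eqnorm} is the right way to keep the constants $h$-independent.
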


The unique solvability of the discrete variational problem \eqref{eq:dis_vf} and the asymptotic quasi-optimality of the discrete solutions immediately follow from Lemma~\ref{lem:dis_inf_sup}, see \cite[Theorem~4.2.1]{Sauter2011}. 
\begin{theorem}
\label{thm:dis_solvability}
     Let Assumptions~\ref{ast:wavenumber} and \ref{ast:inf_sup_const} be satisfied. Then, there exists an $h_0 > 0$ such that for all $h < h_0$, the discrete problem \eqref{eq:dis_vf} has a unique solution $\ub_h \in \UUs_h$. In addition, the discrete solutions $\ub_h$ converge to the solution $\ub \in \HHs^{-1/2}_{\times}(\divt_\Gm, \Gm)$ of the problem \eqref{eq:vf} and satisfy the asymptotically quasi-optimal error estimate
    \begin{equation}
    \label{eq:quasioptimal_error}
        \norm{\ub - \ub_h}_{\HHs^{-1/2}_{\times}(\divt_\Gm, \Gm)} \le C \inf_{\wb_h \in \UUs_h} \norm{\ub - \wb_h}_{\HHs^{-1/2}_{\times}(\divt_\Gm, \Gm)},
    \end{equation}
    for some constant $C > 0$ depending only on $\Gm$ and $\kappa$. 
\end{theorem}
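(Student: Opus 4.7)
The plan is to invoke the standard Petrov--Galerkin framework, with the discrete inf-sup condition of Lemma~\ref{lem:dis_inf_sup} as the main hypothesis; this is essentially the content of the abstract theorem cited as \cite[Theorem~4.2.1]{Sauter2011}.

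For existence and uniqueness, I would first observe that $\dim \UUs_h = \dim \VVs_h < \infty$, so the discrete problem \eqref{eq:dis_vf} is a square linear system. The inf-sup condition \eqref{eq:dis_inf_sup} forces any $\ub_h \in \UUs_h$ with $b(\ub_h, \vb_h) = 0$ for all $\vb_h \in \VVs_h$ to vanish, hence the discrete operator is injective and therefore bijective, yielding a unique $\ub_h$ for every $h < h_0$.

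For the error estimate I would run the standard Cea-type argument. Given any $\wb_h \in \UUs_h$, Galerkin orthogonality gives $b(\ub_h - \wb_h, \vb_h) = b(\ub - \wb_h, \vb_h)$ for all $\vb_h \in \VVs_h$. Applying \eqref{eq:dis_inf_sup} to $\ub_h - \wb_h$ together with the boundedness of $b(\cdot,\cdot)$ on $\HHs^{-1/2}_\times(\divt_\Gm, \Gm) \times \HHs^{-1/2}_\times(\divt_\Gm, \Gm)$ (which follows by writing $C_\kappa = C_{i\kappa_0^\prime} + (C_\kappa - C_{i\kappa_0^\prime})$ and combining the contraction property \eqref{eq:contraction_Cik}, the compactness of Lemma~\ref{lem:compact1}, and the self-duality identity \eqref{eq:self_dual}) yields $\norm{\ub_h - \wb_h}_{\XXs_{\kappa_0^\prime}} \le (M/\gamma) \norm{\ub - \wb_h}_{\XXs_{\kappa_0^\prime}}$ for a continuity constant $M$ depending only on $\kappa$, $\kappa_0^\prime$, and $\Gm$. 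A triangle inequality then gives $\norm{\ub - \ub_h}_{\XXs_{\kappa_0^\prime}} \le (1 + M/\gamma) \inf_{\wb_h \in \UUs_h} \norm{\ub - \wb_h}_{\XXs_{\kappa_0^\prime}}$.

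The remaining step is to return to the $\HHs^{-1/2}_\times(\divt_\Gm, \Gm)$-norm in \eqref{eq:quasioptimal_error}. Since $S_{i\kappa_0^\prime}$ is continuous and $\HHs^{-1/2}_\times(\divt_\Gm, \Gm)$-elliptic by Lemma~\ref{lem:ellipticity}, the norms $\norm{\cdot}_{\XXs_{\kappa_0^\prime}}$ and $\norm{\cdot}_{\HHs^{-1/2}_\times(\divt_\Gm, \Gm)}$ are equivalent, so the estimate transfers with a constant $C$ depending only on $\kappa$ and $\Gm$ (note that $\kappa_0^\prime$ is fixed by Assumption~\ref{ast:inf_sup_const} and depends only on $\Gm$). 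The asserted convergence $\ub_h \to \ub$ then follows from the density of $\bigcup_h \UUs_h$ in $\HHs^{-1/2}_\times(\divt_\Gm, \Gm)$ (see \cite[Corollary~5]{BH2003}). I anticipate no substantive obstacle here: all the analytic work has been placed in Lemmas~\ref{lem:coercivity_a_h} and \ref{lem:dis_inf_sup}, so this theorem is a routine consequence of the abstract Petrov--Galerkin theory.
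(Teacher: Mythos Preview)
Your proposal is correct and follows exactly the paper's approach: the paper simply states that the result ``immediately follow[s] from Lemma~\ref{lem:dis_inf_sup}, see \cite[Theorem~4.2.1]{Sauter2011}'', and you have spelled out the standard Petrov--Galerkin argument behind that citation. Your treatment is more detailed than the paper's one-line proof but introduces no new ideas; the only minor remark is that boundedness of $b(\cdot,\cdot)$ is already given by the continuity of $C_\kappa$ in Definition~\ref{def:intops}, so the decomposition via $C_{i\kappa_0^\prime}$ is unnecessary there.
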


It is worth mentioning that the quasi-optimal error estimate \eqref{eq:quasioptimal_error} alone does not imply the actual convergence rate of the Galerkin discretization scheme \eqref{eq:dis_vf}. When investigating the convergence rate of a Galerkin discretization scheme for Maxwell's equations on polyhedral domains, we must take into account the singularities of the solutions, as shown by Costabel and Dauge in \cite{CD2000}. In particular, the error of best approximations by lowest-order Raviart-Thomas boundary elements was established in \cite[Lemma~8.1]{HS2003b}, which is limited by a regularity exponent $s^\ast > 0$ of the Laplace-Beltrami operator $\Delta_\Gm$. The parameter $s^\ast$ depends only on the geometry of $\Gm$ in neighborhoods of vertices. Moreover, there exist polyhedral vertices for which $s^\ast > 0$ is arbitrarily small, as demonstrated in \cite{BCS2002}. We refer the reader to \cite[Theorem~8]{BCS2002} and \cite[Theorem~2.1]{HS2003b} for more details on the regularity parameter $s^\ast$. Now, we are in the position to restate Lemma~8.1 in \cite{HS2003b}, which we exploit for our convergence analysis.

\begin{lemma}
    \label{lem:error}
    If $\zb \in \HHs^s_\times(\divt_\Gm, \Gm), s > 0$, then for any $\veps > 0$, it holds that
    \[
        \inf_{\wb_h \in \UUs_h} \norm{\zb - \wb_h}_{\HHs^{-1/2}_{\times}(\divt_\Gm, \Gm)} \le C h^{\min\brac{\tfrac{3}{2}-\veps, s+\tfrac{1}{2}-\veps, 1+s^\ast, s +s^\ast}} \norm{\zb}_{\HHs^s_\times(\divt_\Gm, \Gm)},
    \]
    with constant $C > 0$ depending only on $\Gm, s^\ast$ and $\veps$. 
\end{lemma}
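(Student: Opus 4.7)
The plan is to adapt the approximation argument in \cite[Lemma~8.1]{HS2003b}, whose main idea is to bypass the need for pointwise Raviart-Thomas interpolation (which is not defined on $\HHs^s_\times(\divt_\Gm, \Gm)$ for small $s$) by introducing a Hodge-type splitting of $\zb$ on the polyhedral surface $\Gm$. Concretely, one first solves the surface Laplace-Beltrami problem $\Delta_\Gm q = \divt_\Gm \zb$ with $q \in \Hs^1(\Gm)/\R$, sets $\zb_0 := \gradt_\Gm q$, and defines the divergence-free remainder $\zb_1 := \zb - \zb_0$; on a simply connected $\Gm$ one may further write $\zb_1 = \curlt_\Gm \psi$ with $\psi \in \Hs^1(\Gm)/\R$ (the harmonic contribution in more general topologies is smooth and may be absorbed into the constant).

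The next step is to track the regularity of the two potentials. By the shift theorem for $\Delta_\Gm$ on Lipschitz polyhedral surfaces \cite{BCS2002,CD2000}, one obtains $q \in \Hs^{1+t}(\Gm)$ with $t := \min\{s + \tfrac{1}{2}, s^\ast\}$, and analogously for $\psi$. This is precisely where $s^\ast$ enters: it caps how much regularity is gained by inverting $\Delta_\Gm$, due to the vertex singularities of $\Gm$. A small $\veps > 0$ loss is incorporated to stay away from integer Sobolev endpoints when duality and interpolation are applied.

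Given this regularity, component-wise approximation proceeds as follows. For the gradient part, let $q_h$ be the piecewise-linear nodal interpolant of $q$ on $\Gm_h$; then $\gradt_\Gm q_h \in \UUs_h$, and the Bramble-Hilbert lemma on shape-regular, quasi-uniform triangulations, combined with duality to exchange one order for the $-1/2$ factor in the norm, yields an $\HHs^{-1/2}_\times(\Gm)$-error of order $h^{\min\{1+s^\ast,\, s+\tfrac{1}{2}\}-\veps}$; the divergence error is controlled identically via $\divt_\Gm \gradt_\Gm q_h = \Delta_\Gm q_h$ together with the commuting property $\divt_\Gm \Pi_{\mathrm{RT}} = \Pi_{\Pbb_0} \divt_\Gm$. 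For the curl part, $\curlt_\Gm$ of the nodal interpolant of $\psi$ also lies in $\UUs_h$ and is automatically $\divt_\Gm$-free, so only the tangential error survives; a negative-order duality argument gains an extra half order and produces a bound of order $h^{\min\{\tfrac{3}{2},\, s+s^\ast\}-\veps}$. Summing the two contributions with the triangle inequality yields the four-way minimum appearing in the statement.

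The main obstacle is the sharp regularity analysis of $\Delta_\Gm^{-1}$ at the vertices of $\Gm$, which is the source of the caps $1+s^\ast$ and $s+s^\ast$, while the ceiling $\tfrac{3}{2}-\veps$ reflects the limit of piecewise-smooth polynomial approximation across edges of $\Gm_h$. Making this quantitative requires the vertex-singularity machinery of Costabel-Dauge \cite{CD2000} and the polyhedral Sobolev framework of \cite{BC2001,BC2001b}, together with a careful bookkeeping of the arbitrarily small $\veps$-loss at Sobolev endpoints. Once the shift theorem for $\Delta_\Gm$ is in hand the remaining steps are largely mechanical, and I would refer to \cite{HS2003b} for the technical details.
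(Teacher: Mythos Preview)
Your proposal is correct and aligns with the paper: the paper does not actually prove this lemma but simply restates \cite[Lemma~8.1]{HS2003b} verbatim and refers the reader there, so your sketch of the Hodge splitting, the $\Delta_\Gm$-shift theorem producing the $s^\ast$-caps, and the component-wise nodal/Raviart--Thomas approximation is precisely the argument behind the cited result.
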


Finally, we are able to establish the convergence rate of the Galerkin discretization \eqref{eq:dis_vf} of the MFIE, employing Raviart-Thomas boundary elements for the solution space and Buffa-Christiansen boundary elements for the test space.
\begin{theorem}
    \label{thm:convergence_rate}
    Let Assumptions~\ref{ast:wavenumber} and \ref{ast:inf_sup_const} be satisfied. In addition, we assume that the solution to \eqref{eq:vf} satisfies $\ub \in \HHs^s_\times(\divt_\Gm, \Gm), s > 0$. Then, there exists an $h_0 > 0$ such that for all $h < h_0$ and for any $\veps > 0$, the unique solution $\ub_h \in \UUs_h$ to \eqref{eq:dis_vf} satisfies
    \begin{equation}
    \label{eq:convergence_rate}
        \norm{\ub - \ub_h}_{\HHs^{-1/2}_{\times}(\divt_\Gm, \Gm)} \le C h^{\min\brac{\tfrac{3}{2}-\veps, s+\tfrac{1}{2}-\veps, 1+s^\ast, s +s^\ast}} \norm{\ub}_{\HHs^s_\times(\divt_\Gm, \Gm)},
    \end{equation}
    with constant $C > 0$ depending only on $\Gm, \kappa, s^\ast$ and $\veps$.
\end{theorem}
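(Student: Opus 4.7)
The plan is essentially a concatenation of the two preceding results in this section, so the proof will be short. Under Assumptions~\ref{ast:wavenumber} and \ref{ast:inf_sup_const}, Theorem~\ref{thm:dis_solvability} already supplies, for all $h < h_0$, the existence and uniqueness of $\ub_h \in \UUs_h$ together with the asymptotically quasi-optimal bound
\[
    \norm{\ub - \ub_h}_{\HHs^{-1/2}_{\times}(\divt_\Gm, \Gm)} \le C_1 \inf_{\wb_h \in \UUs_h} \norm{\ub - \wb_h}_{\HHs^{-1/2}_{\times}(\divt_\Gm, \Gm)},
\]
where $C_1$ depends only on $\Gm$ and $\kappa$. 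The first step of the proof is therefore just to quote this estimate.

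The second step is to use the additional regularity $\ub \in \HHs^s_\times(\divt_\Gm, \Gm)$ with $s > 0$ and apply Lemma~\ref{lem:error} with $\zb := \ub$. This immediately yields, for any $\veps > 0$,
\[
    \inf_{\wb_h \in \UUs_h} \norm{\ub - \wb_h}_{\HHs^{-1/2}_{\times}(\divt_\Gm, \Gm)} \le C_2\, h^{\min\brac{\tfrac{3}{2}-\veps,\, s+\tfrac{1}{2}-\veps,\, 1+s^\ast,\, s +s^\ast}} \norm{\ub}_{\HHs^s_\times(\divt_\Gm, \Gm)},
\]
with $C_2$ depending only on $\Gm$, $s^\ast$ and $\veps$. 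Chaining the two inequalities and setting $C := C_1 C_2$ produces the claimed bound \eqref{eq:convergence_rate} with the stated dependence of the constant on $\Gm$, $\kappa$, $s^\ast$ and $\veps$ (possibly after further shrinking $h_0$, which is harmless).

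There is no real obstacle in this argument: both analytical ingredients have already been earned upstream. The genuinely nontrivial content is hidden in those ingredients, namely the discrete inf-sup condition of Lemma~\ref{lem:dis_inf_sup} that feeds Theorem~\ref{thm:dis_solvability} (which in turn rests on the generalized G\aa{}rding inequality \eqref{eq:Garding}, the duality \eqref{eq:discrete_dual} between $\UUs_h$ and $\VVs_h$, and a Fredholm alternative argument), and the best-approximation estimate of Lemma~\ref{lem:error}, whose exponent is sharpened by the Laplace--Beltrami regularity parameter $s^\ast$ of the polyhedron. The only care needed in writing the final proof is to verify that the constants combine as advertised and that no hidden $h$-dependence leaks in; both are immediate from the way $C_1$ and $C_2$ are produced.
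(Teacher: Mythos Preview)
Your proposal is correct and matches the paper's approach: the theorem is stated as an immediate consequence of combining the quasi-optimal error estimate from Theorem~\ref{thm:dis_solvability} with the best-approximation bound of Lemma~\ref{lem:error}, exactly as you do.
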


\subsection{Matrix representation}
Let $\{\ub_1, \ub_2, \ldots, \ub_N\}$ and $\{\vb_1, \vb_2, \ldots, \vb_N\}$ be bases of the boundary element spaces $\UUs_h$ and $\VVs_h$, respectively, with $N := \dim \UUs_h = \dim \VVs_h$. The solution $\ub_h \in \UUs_h$ to \eqref{eq:dis_vf} can be represented by its expansion coefficient vector $\wh{\ub}_h := (\wh{u}_1, \wh{u}_2, \ldots, \wh{u}_N)^{\transpose} \in \C^N$ such that
\begin{equation}
\label{eq:discrete_solution}
    \ub_h = \sum\limits_{i=1}^N \wh{u}_i \ub_i.
\end{equation}
Next, the Galerkin matrices of the sesquilinear form $b(\cdot, \cdot)$ and the pairing $\inprod{\cdot, \cdot}_{\times, \Gm}$ are respectively defined by
\[
   {[\BBs]}_{mn} := b(\ub_n, \vb_m), \qqqqq {[\DDs]}_{mn} := \inprod{\ub_n, \ovl{\vb_m}}_{\times, \Gm}, 
\]
with $m, n = 1, 2, \ldots, N$. Then, the coefficient vector $\wh{\ub}_h$ is the solution to the following matrix system 
\begin{equation}
\label{eq:matrix_system}
    \BBs \wh{\ub}_h = \bb,
\end{equation}
where the right-hand side vector ${[\bb]}_m := \inprod{\fb, \ovl{\vb_m}}_{\times, \Gm}$. The solvability of \eqref{eq:matrix_system} is a direct consequence of Theorem~\ref{thm:dis_solvability}.
\begin{corollary}
    Let Assumptions~\ref{ast:wavenumber} and \ref{ast:inf_sup_const} be satisfied. Then, there exists an $h_0 > 0$ such that for all $h < h_0$, the matrix equation \eqref{eq:matrix_system} has a unique solution $\wh{\ub}_h \in \C^N$. In addition, the corresponding $\ub_h \in \UUs_h$ defined by \eqref{eq:discrete_solution} is a quasi-optimal approximation to the solution $\ub$ of \eqref{eq:vf} in the sense of \eqref{eq:quasioptimal_error}.
\end{corollary}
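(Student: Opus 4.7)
The plan is to show that the matrix equation \eqref{eq:matrix_system} is nothing but the coordinate representation of the discrete variational problem \eqref{eq:dis_vf} in the chosen bases, and then to transfer the conclusions of Theorem~\ref{thm:dis_solvability} to the matrix level by a standard finite-dimensional argument.

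First I would establish the equivalence between \eqref{eq:dis_vf} and \eqref{eq:matrix_system}. Given any $\ub_h \in \UUs_h$ expanded as in \eqref{eq:discrete_solution}, the sesquilinearity of $b(\cdot,\cdot)$ in its first argument yields $b(\ub_h,\vb_m) = \sum_{n=1}^N \wh{u}_n b(\ub_n,\vb_m) = [\BBs\wh{\ub}_h]_m$, while the test condition reduces to $b(\ub_h,\vb_m) = \inprod{\fb,\ovl{\vb_m}}_{\times,\Gm} = [\bb]_m$ for each $m=1,\ldots,N$. Since $\{\vb_m\}$ is a basis of $\VVs_h$, the requirement that this equality hold for every $\vb_h \in \VVs_h$ is equivalent to its holding for each $\vb_m$. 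Thus $\wh{\ub}_h \in \C^N$ solves \eqref{eq:matrix_system} if and only if the associated $\ub_h$ solves \eqref{eq:dis_vf}.

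Next I would invoke Theorem~\ref{thm:dis_solvability}: under Assumptions~\ref{ast:wavenumber} and \ref{ast:inf_sup_const}, there exists $h_0>0$ such that for all $h<h_0$ the discrete problem \eqref{eq:dis_vf} admits a unique solution $\ub_h \in \UUs_h$. Taking $\fb = \zrb$ in particular shows that the only $\ub_h \in \UUs_h$ satisfying $b(\ub_h,\vb_h)=0$ for all $\vb_h \in \VVs_h$ is $\ub_h = \zrb$; via the equivalence above this means the homogeneous system $\BBs\wh{\ub}_h = \zrb$ has only the trivial solution. Since $\BBs \in \C^{N\times N}$ is a square matrix and $\dim\UUs_h=\dim\VVs_h=N$, injectivity forces $\BBs$ to be invertible, so \eqref{eq:matrix_system} admits a unique solution $\wh{\ub}_h \in \C^N$ for every right-hand side $\bb$.

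Finally, the quasi-optimal error bound is immediate: the $\ub_h$ reconstructed from $\wh{\ub}_h$ via \eqref{eq:discrete_solution} coincides with the unique solution of \eqref{eq:dis_vf} supplied by Theorem~\ref{thm:dis_solvability}, which satisfies \eqref{eq:quasioptimal_error}. Since the argument is essentially a translation from the abstract variational setting to coordinates, I do not anticipate any real obstacle; the only subtlety worth emphasising is the use of $\dim\UUs_h=\dim\VVs_h$ to upgrade the injectivity inherited from Theorem~\ref{thm:dis_solvability} into full invertibility of the square matrix $\BBs$.
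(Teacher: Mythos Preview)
Your proposal is correct and matches the paper's approach: the paper states the corollary as ``a direct consequence of Theorem~\ref{thm:dis_solvability}'' without further proof, and you have simply written out the standard finite-dimensional equivalence argument that underlies that remark.
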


Now, we investigate the conditioning property of the system \eqref{eq:matrix_system}. Let $\lambda_{\max}(\AAs)$ and $\lambda_{\min}(\AAs)$ be the maximal and minimal (by moduli) eigenvalues of a square matrix $\AAs$, respectively. The spectral condition number of $\AAs$ is then defined by
\[
    \cond(\AAs) := \dfrac{\abs{\lambda_{\max}(\AAs)}}{\abs{\lambda_{\min}(\AAs)}}.
\]
The following result shows that the problem \eqref{eq:matrix_system} is well-conditioned regardless of the boundary mesh refinement.
\begin{theorem}
    \label{thm:condition_number}
    Let Assumptions~\ref{ast:wavenumber} and \ref{ast:inf_sup_const} be satisfied. Then, there exist an $h_0 > 0$ and a constant $C > 0$ such that for all $h < h_0$
    \[
        \cond(\DDs^{-1} \BBs) \le C.
    \]  
\end{theorem}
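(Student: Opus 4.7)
The plan is to bound the largest and smallest (in modulus) eigenvalues of $\DDs^{-1}\BBs$ separately, under the identification of $\C^N$ with $\UUs_h$ (and with $\VVs_h$) provided by the chosen bases. Fix $\kappa^\prime_0$ as in Assumption~\ref{ast:inf_sup_const} and equip $\UUs_h$ with $\norm{\cdot}_{\XXs_{\kappa^\prime_0}}$ and $\VVs_h$ with $\norm{S^{-1}_{i\kappa^\prime_0}\cdot}_{\XXs_{\kappa^\prime_0}}$. The key observation is that any eigenvalue $\lambda$ of $\DDs^{-1}\BBs$ corresponds to a nonzero $\ub_h \in \UUs_h$ satisfying
\[
    b(\ub_h, \vb_h) = \lambda \inprod{\ub_h, \ovl{\vb_h}}_{\times, \Gm}, \qquad \forall \vb_h \in \VVs_h,
\]
so both sides can be estimated using the results already established.

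For the upper bound on $|\lambda|$, I would exploit the continuity of $b(\cdot,\cdot)$ on $\HHs^{-1/2}_\times(\divt_\Gm, \Gm)$, whose continuity constant $C_b$ is inherited from the operator norm of $\tfrac{1}{2} Id + C_\kappa$ and is manifestly independent of $h$. For every $\vb_h \in \VVs_h$ with nonzero denominator,
\[
    |\lambda|\,\dfrac{|\inprod{\ub_h, \ovl{\vb_h}}_{\times,\Gm}|}{\norm{S^{-1}_{i\kappa^\prime_0}\vb_h}_{\XXs_{\kappa^\prime_0}}} = \dfrac{|b(\ub_h, \vb_h)|}{\norm{S^{-1}_{i\kappa^\prime_0}\vb_h}_{\XXs_{\kappa^\prime_0}}} \le C_b \norm{\ub_h}_{\XXs_{\kappa^\prime_0}}.
\]
Taking the supremum over $\vb_h \in \VVs_h$ on the left and invoking the discrete duality inf-sup \eqref{eq:discrete_dual} yields $|\lambda|\,\alpha \le C_b$, hence $|\lambda_{\max}(\DDs^{-1}\BBs)| \le C_b/\alpha$.

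For the lower bound on $|\lambda|$, I would apply the discrete inf-sup of Lemma~\ref{lem:dis_inf_sup} to $\ub_h$, substitute $b(\ub_h,\vb_h) = \lambda\inprod{\ub_h,\ovl{\vb_h}}_{\times,\Gm}$ in the numerator, and then dominate the inner product by the continuity part of the self-duality identity \eqref{eq:self_dual}, namely $|\inprod{\ub_h,\ovl{\vb_h}}_{\times,\Gm}| \le \norm{\ub_h}_{\XXs_{\kappa^\prime_0}}\norm{S^{-1}_{i\kappa^\prime_0}\vb_h}_{\XXs_{\kappa^\prime_0}}$. This produces $\gamma \le |\lambda|$, so $|\lambda_{\min}(\DDs^{-1}\BBs)| \ge \gamma$ and $\cond(\DDs^{-1}\BBs) \le C_b/(\alpha\gamma)$.

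The main obstacle is not technical but is ensuring that every constant involved is independent of $h$: continuity of $b$ holds uniformly because $b$ is inherited from a continuous bilinear form on the infinite-dimensional space; the duality constant $\alpha$ is $h$-independent by the very statement of \eqref{eq:discrete_dual}; and the inf-sup constant $\gamma$ is $h$-independent for $h < h_0$ by Lemma~\ref{lem:dis_inf_sup}. Taking the same threshold $h_0$ as in that lemma closes the argument and fixes the constant $C = C_b/(\alpha\gamma)$ appearing in the statement.
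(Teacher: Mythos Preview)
Your proof is correct and essentially matches the paper's argument: both rely on the four $h$-independent constants coming from the continuity and discrete inf-sup of the pairing $\inprod{\cdot,\cdot}_{\times,\Gm}$ (namely $1$ and $\alpha$) and of $b(\cdot,\cdot)$ (namely $\norm{b}$ and $\gamma$), arriving at the identical bound $\cond(\DDs^{-1}\BBs)\le \norm{b}/(\alpha\gamma)$. The only cosmetic difference is that the paper phrases the estimates as operator-norm bounds for the discrete maps $B_h, D_h:\UUs_h\to\VVs_h^\prime$ and their inverses and then invokes $|\lambda_{\max}(\AAs)|\le\norm{\AAs}$ for $\AAs=\DDs^{-1}\BBs$ and $\AAs=\BBs^{-1}\DDs$, whereas you work directly with the generalized eigenvalue relation $b(\ub_h,\vb_h)=\lambda\inprod{\ub_h,\ovl{\vb_h}}_{\times,\Gm}$.
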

\begin{proof}
    We follow the proof of \cite[Theorem~2.1]{Hiptmair2006}. Let the subspaces $\UUs_h$ and $\VVs_h$ be equipped with the norm $\norm{\cdot}_{\XXs_{\kappa^\prime_0}}$, where $\kappa^\prime_0$ fulfills Assumption~\ref{ast:inf_sup_const}. We denote by $B_h : \UUs_h \to \VVs_h^\prime$ and $D_h : \UUs_h \to \VVs_h^\prime$ the bounded linear operators associated with the sesquilinear form $b(\cdot, \cdot)$ and the pairing $\inprod{\cdot, \cdot}_{\times, \Gm}$, respectively. The following estimates hold true due to \eqref{eq:self_dual}, \eqref{eq:discrete_dual} and \eqref{eq:dis_inf_sup}
    \begin{align*}
        & \norm{D_h}_{\UUs_h \to \VVs_h^\prime} \le 1, &&  \norm{D_h^{-1}}_{\VVs_h^\prime \to \UUs_h} \le \alpha^{-1}, \\
        & \norm{B_h}_{\UUs_h \to \VVs_h^\prime} \le \norm{b}, && \norm{B_h^{-1}}_{\VVs_h^\prime \to \UUs_h} \le \gm^{-1}.
    \end{align*}
    Next, let $\C^N$ be equipped with the norm $\norm{\cdot}_{\XXs_{\kappa^\prime_0}}$ via the coefficient isomorphism with respect to the basis $\{\ub_1, \ub_2, \ldots, \ub_N\}$. Then, there hold that
    \begin{align*}
        \abs{\lambda_{\max}(\DDs^{-1} \BBs)} & \le \norm{\DDs^{-1} \BBs} \le \alpha^{-1} \norm{b}, \\
        \abs{\lambda_{\max}(\BBs^{-1} \DDs)} & \le \norm{\BBs^{-1} \DDs} \le \gamma^{-1}.
    \end{align*}
    Hence, we can conclude
    \[
        \cond(\DDs^{-1} \BBs) = \abs{\lambda_{\max}(\DDs^{-1} \BBs)} \abs{\lambda_{\max}(\BBs^{-1} \DDs)} \le \dfrac{\norm{b}}{\gamma \alpha}.
    \]
\end{proof}

\begin{remark}
    In the paper \cite{CS2022}, the authors introduced a family of 3D open-book Lipschitz polyhedra on which the Galerkin discretization of second-kind boundary integral equations for the Laplace and Helmholtz problems do not converge. The negative claims in that paper do not contradict the present results, since in this paper we consider the relevant operators in the natural trace space $\HHs^{-1/2}_\times(\divt_\Gm, \Gm)$ rather than the space $\LLs^2(\Gm)$ as in \cite{CS2022}.
\end{remark}

\begin{remark}
    The introduction of the purely imaginary wave number $i\kappa^\prime$ (and $\kappa^\prime_0$) is just to check the ``quality'' of $\Gm$ whether the proposed Galerkin method for the MFIE converges on that domain. Once Assumption~\ref{ast:inf_sup_const} is verified, the solvability of the discrete problem \eqref{eq:dis_vf}, the quasi-optimal error estimate and the convergence of numerical solutions are independent of $\kappa^\prime_0$. However, the bound of the condition number in Theorem~\ref{thm:condition_number} might be dependent of $\kappa^\prime_0$ via the constants $\alpha$ and $\gm$.
\end{remark}

\section{Verification of Assumption~\ref{ast:inf_sup_const}}
\label{sec:verification}

Assumption~\ref{ast:inf_sup_const}, along with Assumption~\ref{ast:wavenumber}, gives a sufficient condition for the unique solvability of the discrete problem \eqref{eq:dis_vf}. However, verifying Assumption~\ref{ast:inf_sup_const} for each domain is not straightforward. In this section, we propose a numerical scheme to approximately compute the constants $\alpha$ and $\beta$, then verify this assumption in practice.

We first evaluate the two terms in the denominator of \eqref{eq:discrete_dual}. For $\kappa^\prime > 0$, let $\SSs_{\kappa^\prime}$ be the Galerkin matrix of the $\XXs_{\kappa^\prime}$-inner product $\paren{\cdot, \cdot}_{\XXs_{\kappa^\prime}} : \UUs_h \times \UUs_h \to \C$. Then, the $\XXs_{\kappa^\prime}$-norm of $\ub_h \in \UUs_h$ can be expressed as the $\textbf{\textit{l}}^2$-norm of its coefficient vector $\wh{\ub}_h$ as follows
\begin{equation}
    \label{eq:l2_norm}
    \norm{\ub_h}_{\XXs_{\kappa^\prime}} = \sqrt{\wh{\ub}_h^\transpose \SSs_{\kappa^\prime} \wh{\ub}_h} = \norm{\SSs_{\kappa^\prime}^{1/2} \wh{\ub}_h}_{\textbf{\textit{l}}^2}.
\end{equation}

In order to evaluate $\norm{S^{-1}_{i\kappa^\prime} \vb_h}_{\XXs_{\kappa^\prime}}$, with $\vb_h \in \VVs_h$, we approximate $S^{-1}_{i\kappa^\prime} \vb_h$ by its projection $\wt{\wb}_h \in \wt{\UUs}_h$ with respect to the $\XXs_{\kappa^\prime}$-inner product, i.e., $\wt{\wb}_h \in \wt{\UUs}_h$ is the unique solution to the following equation
\begin{equation}
\label{eq:barycentric_projection}
    \inprod{\wt{\wb}_h, S_{i\kappa^\prime}  \ovl{\wt{\vphib}_h}}_{\times, \Gm} = - \inprod{\vb_h, \ovl{\wt{\vphib}_h}}_{\times, \Gm}, \qqqqq \fa \wt{\vphib}_h \in \wt{\UUs}_h.
\end{equation}
Here, the subspace $\wt{\UUs}_h$ consists of all Raviart-Thomas boundary elements defined on the barycentric refinement $\wt{\Gm}_h$ of $\Gm_h$. Please note that $\UUs_h \subset \wt{\UUs}_h, \VVs_h \subset \wt{\UUs}_h$, and $\dim \wt{\UUs}_h = 6\dim \UUs_h = 6\dim \VVs_h = 6N$. Let $\wt{\SSs}_{\kappa^\prime}$ and $\GGs$ be the Galerkin matrices of the sesquilinear forms defined by the left-hand side and right-hand side of \eqref{eq:barycentric_projection}, respectively. Then, the coefficient vector $\wh{\wb}_h$ of $\wt{\wb}_h$ with respect to the basis of $\wt{\UUs}_h$ is determined by 
\[
    \wh{\wb}_h = \wt{\SSs}_{\kappa^\prime}^{-1} \GGs \wh{\vb}_h,
\]
where $\wh{\vb}_h$ is the coefficient vector of $\vb_h \in \VVs_h$ with respect to the basis $\{\vb_1, \vb_2, \ldots, \vb_N\}$. Thus, $\norm{S^{-1}_{i\kappa^\prime} \vb_h}_{\XXs_{\kappa^\prime}}$ is approximated by 
\begin{equation}
    \label{eq:inverse_approx}
    \norm{S^{-1}_{i\kappa^\prime} \vb_h}_{\XXs_{\kappa^\prime}} \approx \norm{\wt{\wb}_h}_{\XXs_{\kappa^\prime}} = \sqrt{\wh{\wb}_h^\transpose \wt{\SSs}_{\kappa^\prime} \wh{\wb}_h} = \norm{\paren{\GGs^\transpose \wt{\SSs}_{\kappa^\prime}^{-1} \GGs}^{1/2} \wh{\vb}_h}_{\textbf{\textit{l}}^2}.
\end{equation}
Let $\MMs_{\kappa^\prime} := \GGs^\transpose \wt{\SSs}_{\kappa^\prime}^{-1} \GGs$. The matrix $\MMs_{\kappa^\prime}$ is positive-definite, as $\wt{\SSs}_{\kappa^\prime}$ is positive-definite and $\GGs$ is injective. Based on the matrix representations \eqref{eq:l2_norm} and \eqref{eq:inverse_approx}, the left-hand side of the inf-sup condition \eqref{eq:discrete_dual} can be expressed in the following form
\begin{align*}
    \inf_{\ub_h \in \UUs_h} \sup_{\vb_h \in \VVs_h} \dfrac{\abs{\inprod{\ub_h, \ovl{\vb_h}}_{\times, \Gm}}}{\norm{\ub_h}_{\XXs_{\kappa^\prime}} \norm{S^{-1}_{i\kappa^\prime}\vb_h}_{\XXs_{\kappa^\prime}}} & \approx \inf_{\wh{\ub}_h \in \C^N} \sup_{\wh{\vb}_h \in \C^N} \dfrac{\abs{\wh{\vb}_h^\transpose \DDs \wh{\ub}_h}}{\norm{\SSs_{\kappa^\prime}^{1/2} \wh{\ub}_h}_{\textbf{\textit{l}}^2} \norm{\MMs_{\kappa^\prime}^{1/2} \wh{\vb}_h}_{\textbf{\textit{l}}^2}} \\
    & = \inf_{\wh{\xib}_h \in \C^N} \sup_{\wh{\psib}_h \in \C^N} \dfrac{\abs{\wh{\psib}_h^\transpose \MMs_{\kappa^\prime}^{-\transpose/2} \DDs \SSs_{\kappa^\prime}^{-1/2} \wh{\xib}_h}}{\norm{\wh{\xib}_h}_{\textbf{\textit{l}}^2} \norm{\wh{\psib}_h}_{\textbf{\textit{l}}^2}},
\end{align*}
where $\MMs_{\kappa^\prime}^{-\transpose/2} = \paren{\MMs_{\kappa^\prime}^{-1/2}}^{\transpose}$. Hence, for a sufficiently small $h > 0$, the constant $\alpha$ in \eqref{eq:discrete_dual} can be approximated by
\[
    \alpha \approx \abs{\lambda_{\min} \paren{ \MMs_{\kappa^\prime}^{-\transpose/2} \DDs \SSs_{\kappa^\prime}^{-1/2}}}.
\]
Please bear in mind that the space $\C^N$ is now equipped with the Euclidean $\textbf{\textit{l}}^2$-norm. Analogously, taking into account the property \eqref{eq:contraction_Cik}, $\beta$ can be estimated as
\begin{equation}
\label{eq:beta_approx}
    \beta \approx \abs{\lambda_{\max} \paren{\MMs_{\kappa^\prime}^{-\transpose/2} \KKs_{\kappa^\prime} \SSs_{\kappa^\prime}^{-1/2}}},
\end{equation}
where $\KKs_{\kappa^\prime}$ is the Galerkin matrix associated with the operator $C_{i\kappa^\prime}$, i.e.,
\[
    {[\KKs_{\kappa^\prime}]}_{mn} := \inprod{\ub_n, C_{i\kappa^\prime} \ovl{\vb_m}}_{\times, \Gm}, \qqq m, n = 1, 2, \ldots, N.
\]
Finally, we can verify Assumption~\ref{ast:inf_sup_const} for the boundary $\Gm$ by checking if the condition $\tfrac{\alpha}{2} - \beta > 0$ is fulfilled by some $\kappa^\prime > 0$.

\begin{remark}
    Another scheme to approximate the constant $\beta$ is to replace the Galerkin matrix $\KKs_{\kappa^\prime}$ by its counterpart $\wt{\KKs}_{\kappa^\prime}$ tested with $\UUs_h$, i.e.,
    \[
        {[\wt{\KKs}_{\kappa^\prime}]}_{mn} := \inprod{\ub_n, C_{i\kappa^\prime} \ovl{\ub_m}}_{\times, \Gm}, \qqqq m, n = 1, 2, \ldots, N.
    \]
    Then, the matrices $\GGs$ and $\MMs_{\kappa^\prime}$ must also be changed accordingly (the latter is denoted by $\wt{\MMs}_{\kappa^\prime}$). The advantage of this approach is the symmetry of the matrix $\wt{\KKs}_{\kappa^\prime}$, due to \eqref{eq:symmetry_Cik}. As a consequence, all eigenvalues of $\wt{\MMs}_{\kappa^\prime}^{-\transpose/2} \wt{\KKs}_{\kappa^\prime} \SSs_{\kappa^\prime}^{-1/2}$ are real, and its maximal eigenvalue $\abs{\lambda_{\max}}$ might give a better approximation of $\beta$. The disadvantage is that we have to additionally compute the matrix $\wt{\MMs}_{\kappa^\prime}$ and its square root inverse, which is quite expensive. From authors' experience, these two approaches produce similar results. Therefore, the scheme \eqref{eq:beta_approx} is recommended.
\end{remark}

\begin{remark}
    From authors' experience, Assumption~\ref{ast:inf_sup_const} is satisfied for typical geometries in practical applications, usually with larger $\kappa^\prime > 0$ for less regular domains (see Figure~\ref{fig:verification} for an example). When discretizing the integral operators with larger $\kappa^\prime$, finer meshes and/or more quadrature points are required to accurately compute the non-zero entries of the corresponding Galerkin matrices. As a result, the verification step may cost more than the computational cost of solving the MFIE itself. Therefore, verifying Assumption~\ref{ast:inf_sup_const} beforehand is usually not desirable in practice. 
\end{remark}

\section{Numerical results}
\label{sec:results}

This section presents some numerical results to support the analysis of the Galerkin discretization \eqref{eq:dis_vf} of the MFIE. We perform numerical experiments for the electromagnetic scattering by two perfectly conducting bodies: a multiply-connected cuboid and a star-based pyramid, see Figure~\ref{fig:domains}. The verification of Assumption~\ref{ast:inf_sup_const} for these domains are firstly performed. The approximated values of $\alpha$ and $\beta$ with respect to different $\kappa^\prime$ are depicted in Figure~\ref{fig:verification}, which clearly shows that there are some values of $\kappa^\prime$ fulfilling Assumption~\ref{ast:inf_sup_const} for each domain (the values at which the blue curve is above the red one).

\begin{figure}
    \centering
    \begin{subfigure}[b]{0.4\textwidth}
        \centering
        \includegraphics[trim={33cm 15cm 33cm 28cm}, clip, width=\textwidth]{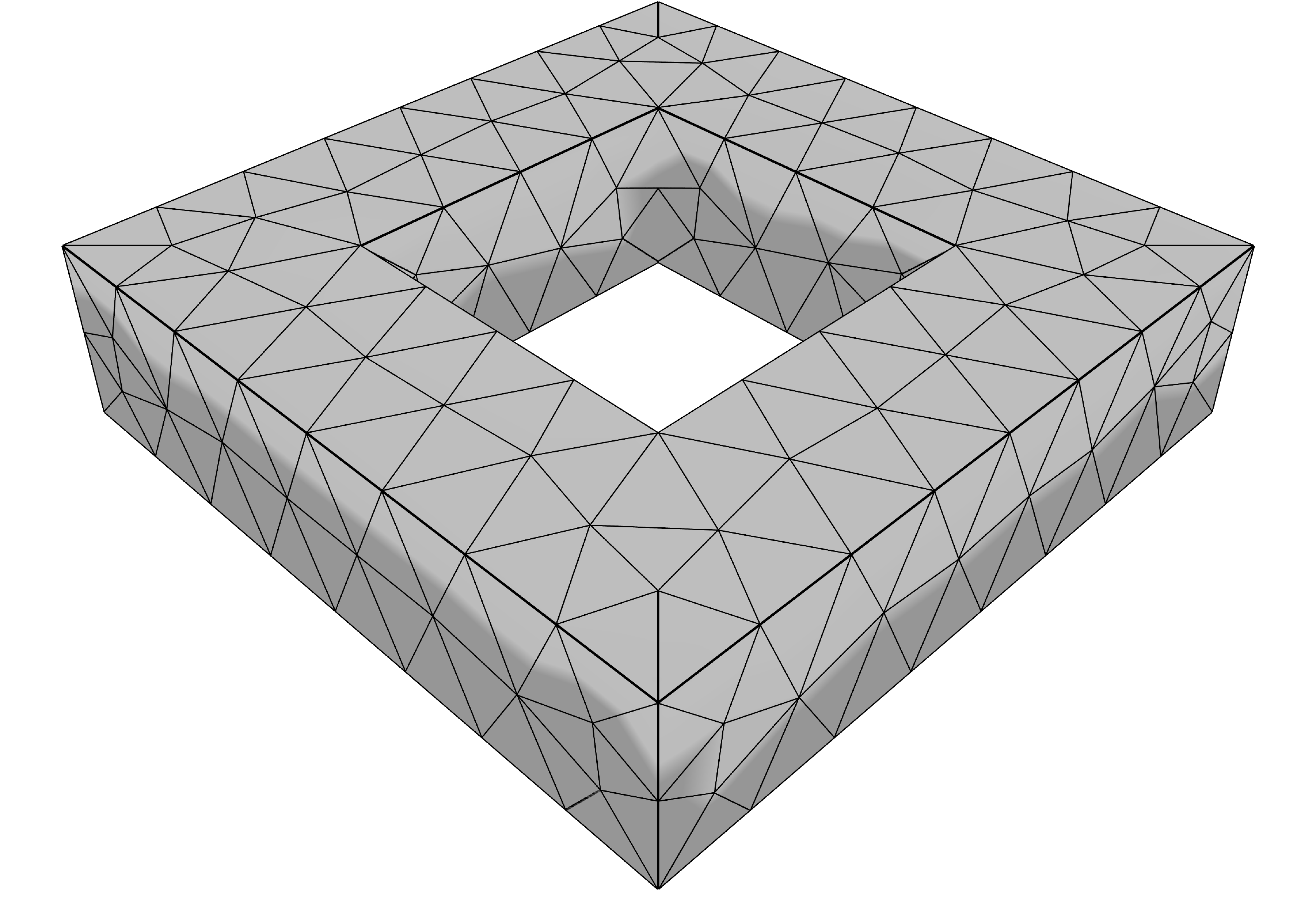}
    \end{subfigure}
    \hfill
    \begin{subfigure}[b]{0.52\textwidth}
        \centering
        \includegraphics[trim={34cm 29cm 34cm 17cm}, clip, width=0.8\textwidth]{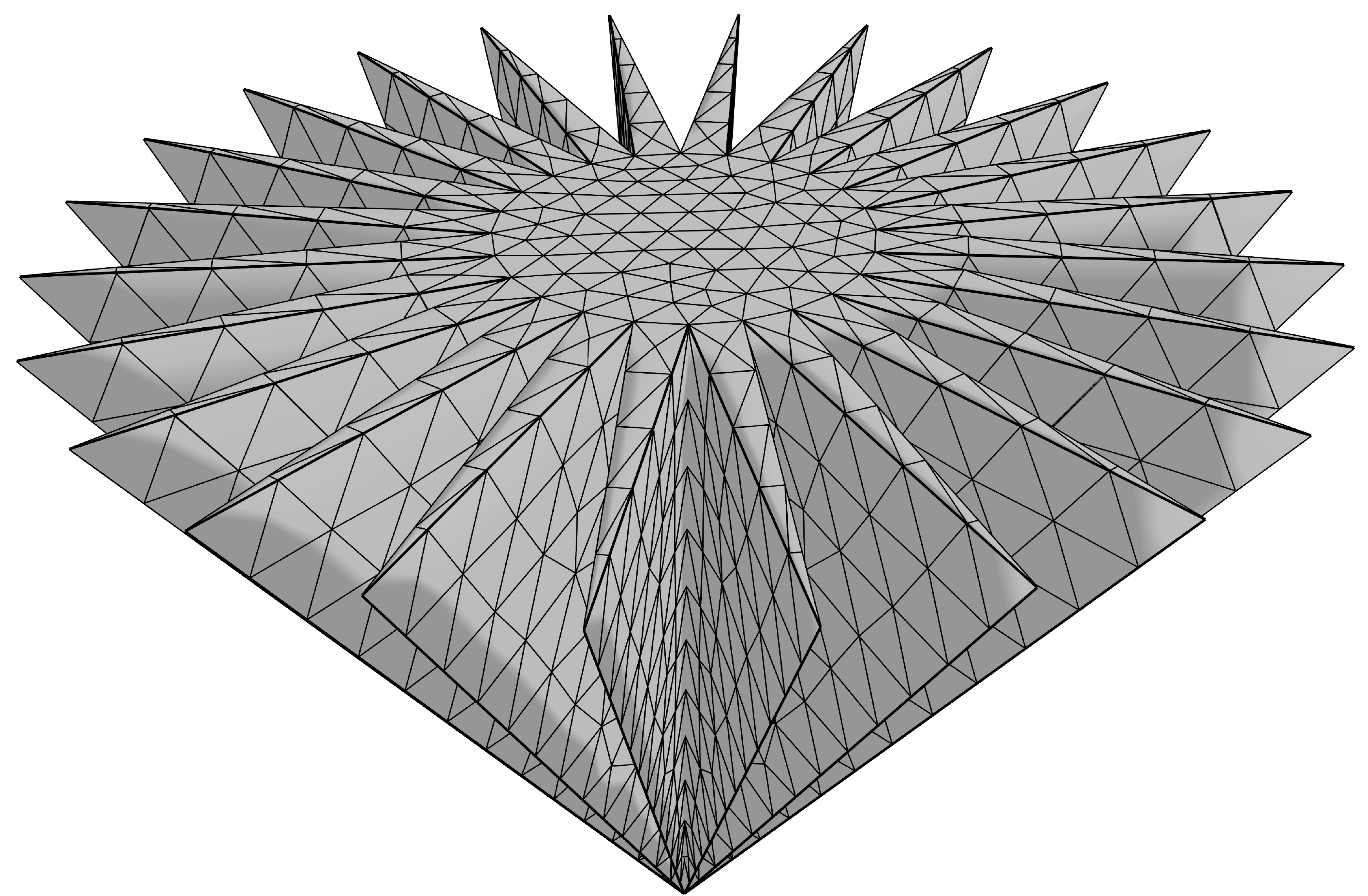}
        \hspace{-1cm}
        \includegraphics[trim={38cm 18cm 38cm 18cm}, clip, width=0.2\textwidth]{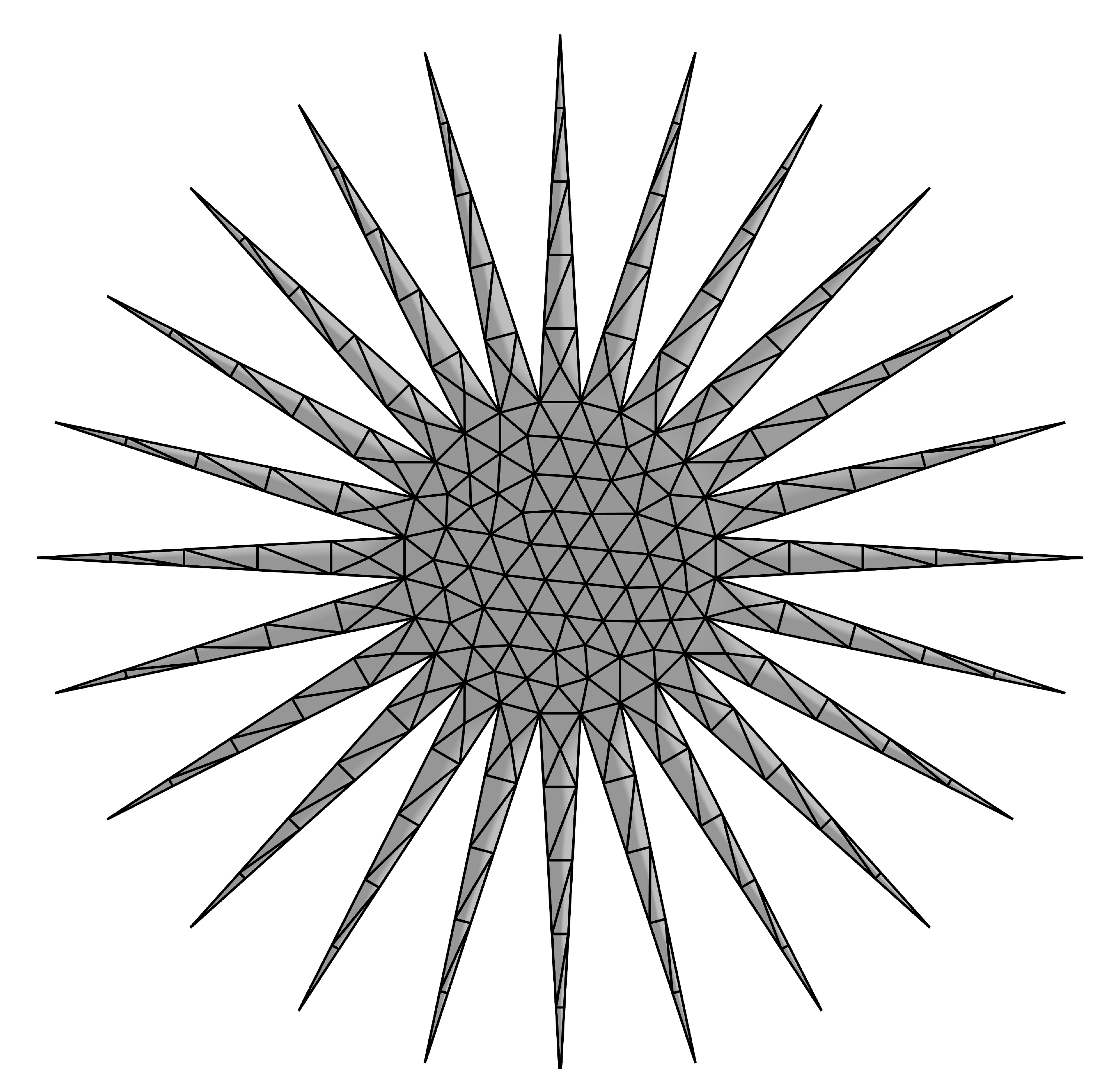}
    \end{subfigure}
    \caption{Triangulation of polyhedral boundaries used in numerical experiments. \textit{Left:} A cuboid of dimension $1 \mathrm{m} \times 1 \mathrm{m} \times 0.25 \mathrm{m}$ with a concentric cuboid hole of dimension $0.5 \mathrm{m} \times 0.5 \mathrm{m} \times 0.25 \mathrm{m}$. \textit{Right:} A pyramid of height $0.5 \mathrm{m}$ and 24-pointed star base (bottom-right corner), whose vertices lie on two concentric circles of radius $1 \mathrm{m}$ and $0.3 \mathrm{m}$.}
    \label{fig:domains}
\end{figure} 

\begin{figure}
    \centering
    \begin{subfigure}[b]{0.49\textwidth}
        \centering
        \includegraphics[width=\textwidth]{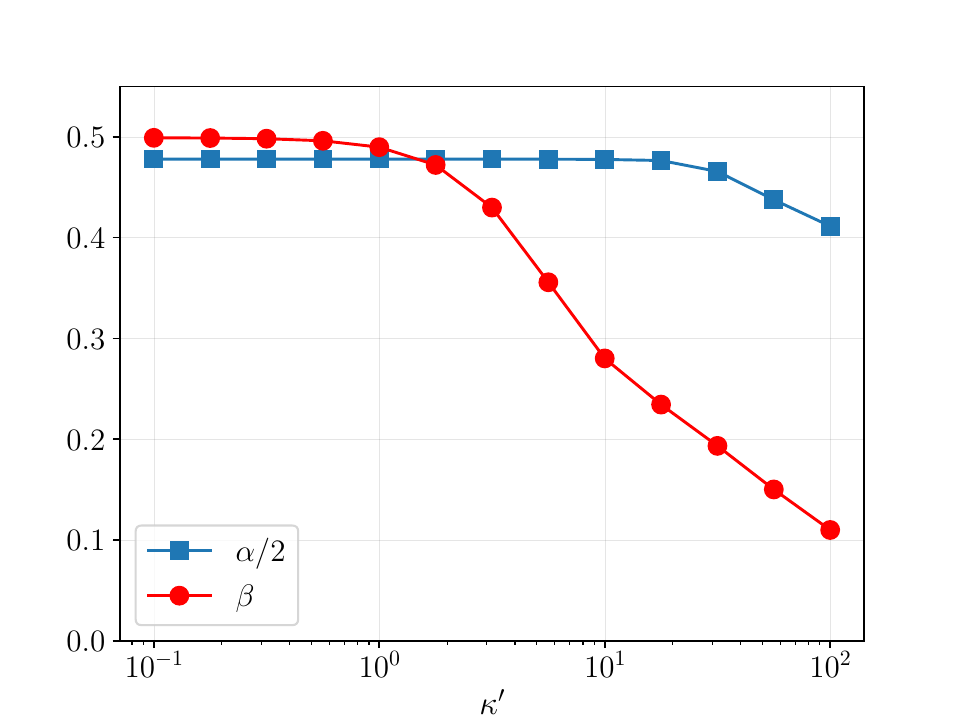}
    \end{subfigure}
    \hfill
    \begin{subfigure}[b]{0.49\textwidth}
        \centering \includegraphics[width=\textwidth]{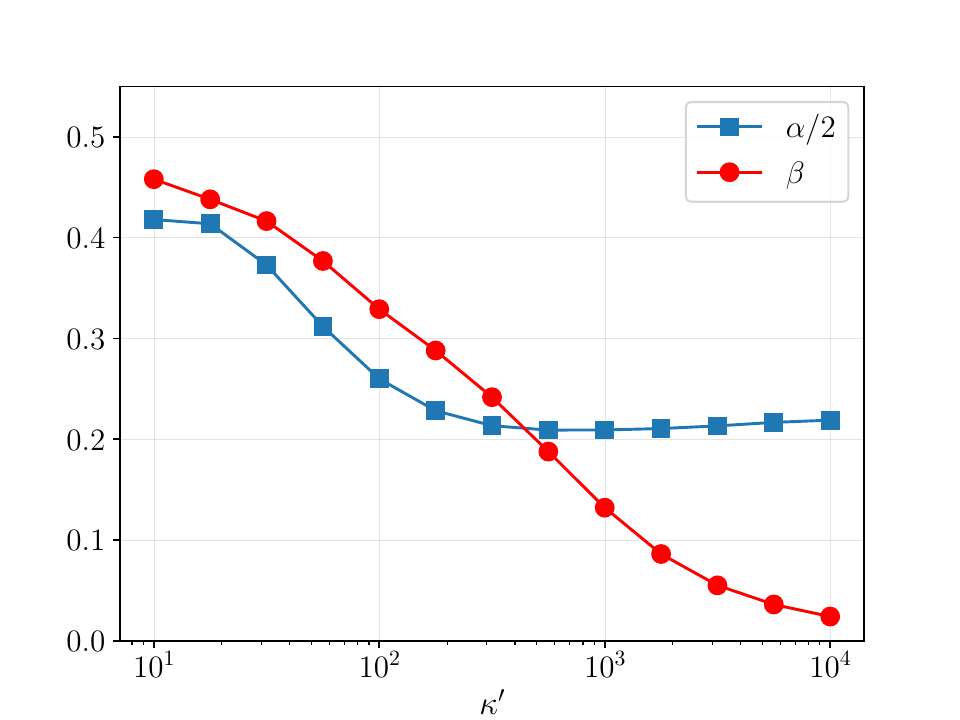}
    \end{subfigure}
    \caption{Approximated constants $\alpha$ and $\beta$ for the verification of Assumption~\ref{ast:inf_sup_const}. \textit{Left:} A cuboid of dimension $\mathrm{1\mathrm{m} \times 1\mathrm{m} \times 0.25\mathrm{m}}$ with a concentric cuboid hole of dimension $\mathrm{0.5\mathrm{m} \times 0.5\mathrm{m} \times 0.25\mathrm{m}}$. \textit{Right:} A pyramid of height $0.5 \mathrm{m}$ and a 24-pointed star base, whose vertices lie on two circles of radius $1\mathrm{m}$ and $0.3\mathrm{m}$. For each domain, there are some $\kappa^\prime$ at which Assumption~\ref{ast:inf_sup_const} is satisfied.}
    \label{fig:verification}
\end{figure}

In all further experiments, the wave number $\kappa = 1$ and the incident electric field is given by
\[
    \eb^{in}(\xb) = \hat{\xb} \exp(-i\kappa \, \hat{\zb} \cdot \xb),
\]
where $\hat{\xb}$ and $\hat{\zb}$ stand for the unit vectors along the $x$-axis and $z$-axis, respectively. In order to examine the convergence of the Galerkin discretization scheme \eqref{eq:dis_vf}, we define the relative error
\[
    \mathrm{err}_h := \dfrac{\abs{\norm{\ub_h}_{\XXs_{\kappa}} - \norm{\ub_{\mathrm{ref}}}_{\XXs_{\kappa}}}}{\norm{\ub_{\mathrm{ref}}}_{\XXs_{\kappa}}},
\]
where $\ub_{\mathrm{ref}}$ is a reference solution to \eqref{eq:dis_vf} computed with a very fine mesh. By means of a triangle inequality, the relative error $\mathrm{err}_h$ exhibits the same convergence rate with that stated in Theorem~\ref{thm:convergence_rate}. 

Next, we examine the well-conditioning of the matrix system \eqref{eq:matrix_system} by computing the condition number of the matrix $\DDs^{-1} \BBs$ for different meshwidth $h$. In addition, to investigate the performance of the proposed discretization with iterative solvers, the corresponding number of GMRES iterations required to solve $\eqref{eq:matrix_system}$ (with tolerance $\veps_0 = 10^{-8}$) is also reported. Even though the bounded condition number of $\DDs^{-1} \BBs$ does not imply a fast convergence of GMRES solvers, it shows a good prediction in the numerical experiments below as well as in practical applications. The condition number and the corresponding GMRES iteration counts of the Galerkin discretization of the MFIE are then compared with those of the EFIE
\begin{equation}
    \label{eq:EFIE}
    S_{\kappa} \ub = \paren{\dfrac{1}{2} Id + C_{\kappa}} \paren{\gm_D^+ \eb^{in}},
\end{equation}
which is derived by taking the exterior tangential trace $\gm_D^+$ of the Stratton-Chu representation formula \eqref{eq:representation} with $\sm = \kappa$. The EFIE \eqref{eq:EFIE} is discretized using lowest-order Raviart-Thomas boundary elements for both the solution space and the test space, as investigated in \cite{HS2003b}.

\subsection{Multiply-connected cuboid}
We start with the scattering by a multiply-connected cuboid of dimension $1\mathrm{m} \times 1 \mathrm{m} \times 0.25 \mathrm{m}$ with a concentric cuboid hole of dimension $0.5\mathrm{m} \times 0.5\mathrm{m} \times 0.25 \mathrm{m}$, see Figure~\ref{fig:domains} (\textit{left}). 

The reference solution $\ub_{\mathrm{ref}}$ is computed with the boundary mesh $\Gm_h$ characterized by the meshwidth $h = 0.0125 \mathrm{m}$. At each vertex of the boundary $\Gm$, there are only three edges meeting, with opening angles $\vphi_1, \vphi_2$ and $\vphi_3$ equal $\pi/2$ or $3\pi/2$. Thus, the regularity parameter $s^\ast = 2\pi/(\vphi_1 + \vphi_2 + \vphi_3) - \veps > 1/2$, see \cite{HS2003b}. It means that the convergence rate, in this case, only depends on the regularity exponent $s$ of the solution, but not on $s^{\ast}$. More specifically, one can obtain the optimal convergence rate $\OO(h^{1.5})$ for the Galerkin discretization \eqref{eq:dis_vf} of the MFIE on this domain, provided that the continuous solution is sufficiently regular, i.e, $\ub \in \HHs^s_\times(\divt_\Gm, \Gm), s \ge 1$. Figure~\ref{fig:error} (\textit{left}) confirms this assertion.
\begin{figure}
    \centering
    \begin{subfigure}[b]{0.49\textwidth}
        \centering
        \includegraphics[width=\textwidth]{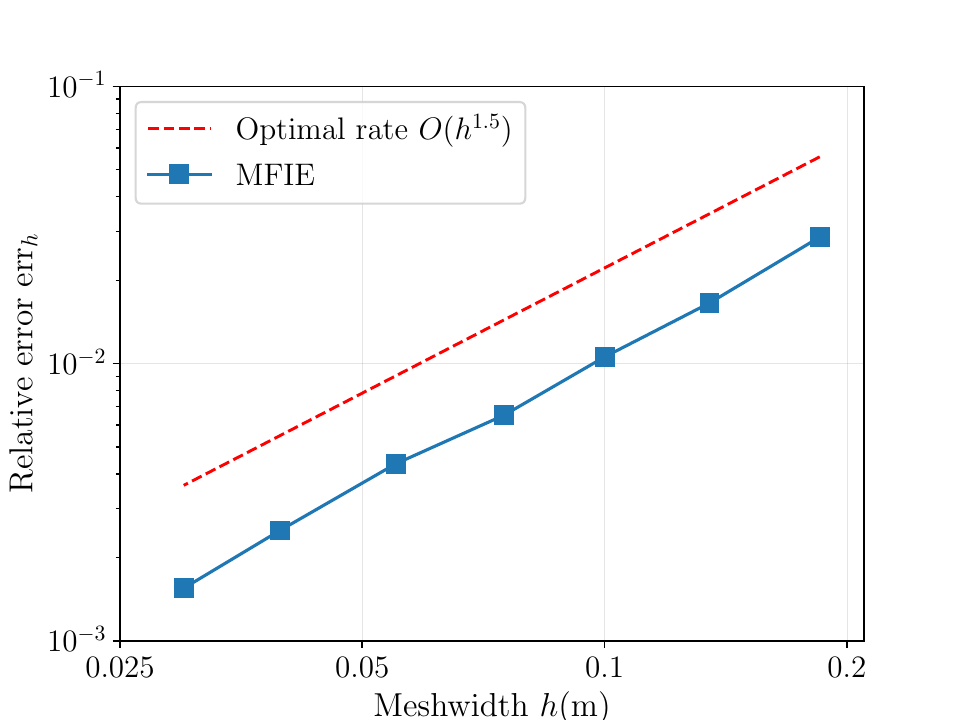}
    \end{subfigure}
    \hfill
    \begin{subfigure}[b]{0.49\textwidth}
        \centering \includegraphics[width=\textwidth]{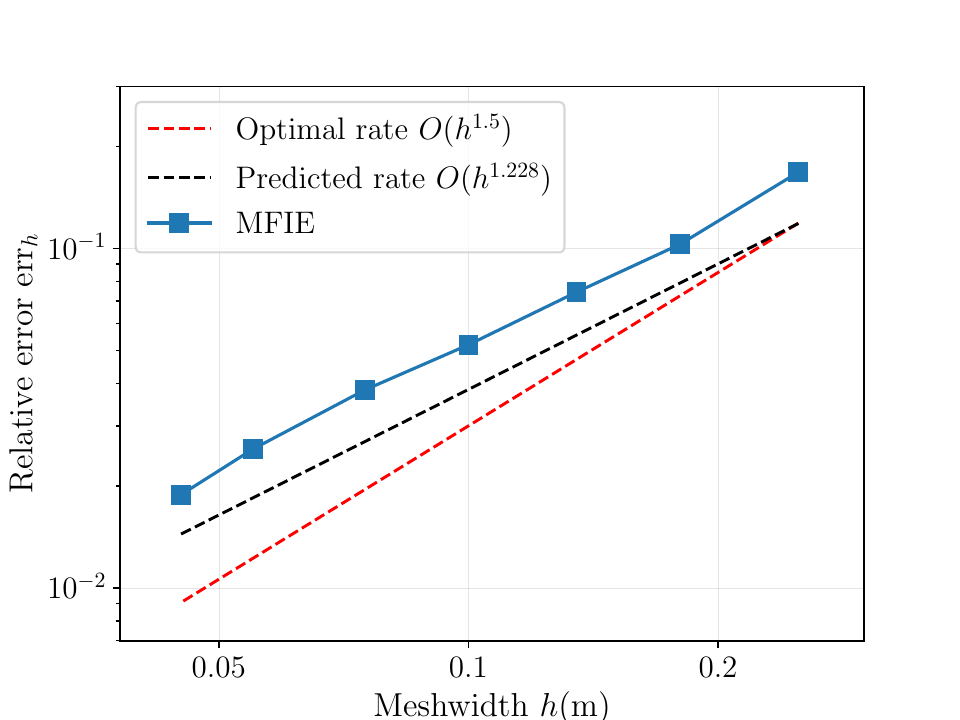}
    \end{subfigure}
    \caption{Relative error of numerical solutions to the MFIE with respect to meshwidth $h$. \textit{Left:} A cuboid of dimension $\mathrm{1\mathrm{m} \times 1\mathrm{m} \times 0.25\mathrm{m}}$ with a concentric cuboid hole of dimension $\mathrm{0.5\mathrm{m} \times 0.5\mathrm{m} \times 0.25\mathrm{m}}$. The numerical solutions converge to the reference solution with an optimal rate. \textit{Right:} A pyramid of height $0.5 \mathrm{m}$ and a 24-pointed star base, whose vertices lie on two circles of radius $1\mathrm{m}$ and $0.3\mathrm{m}$. The numerical solutions exhibit a sub-optimal convergence rate due to the singularities of solution at the bottom vertex.}
    \label{fig:error}
\end{figure}

Figure~\ref{fig:cond} (\textit{left}) depicts the conditioning properties of the Galerkin discretization of the EFIE and MFIE on the multiply-connected cuboid. Whereas the condition number of the Galerkin matrix of the EFIE and the corresponding number of GMRES iterations grow as $\OO(h^{-2})$, those of the MFIE stay almost constant. This result confirms the well-conditioning of the Galerkin discretization \eqref{eq:dis_vf} of the MFIE.

\begin{figure}
    \centering
    \begin{subfigure}[b]{0.49\textwidth}
        \centering
        \includegraphics[width=\textwidth]{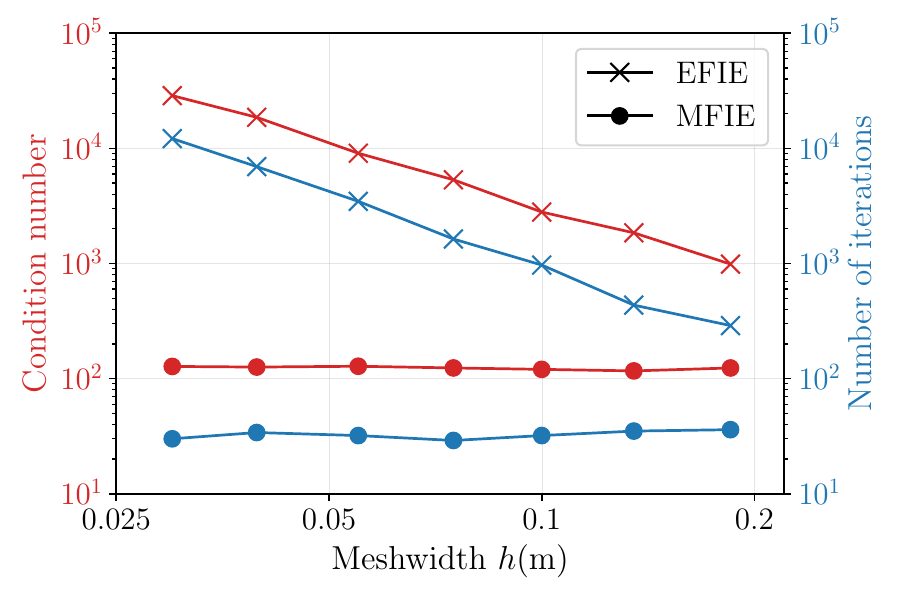}
    \end{subfigure}
    \hfill
    \begin{subfigure}[b]{0.49\textwidth}
        \centering \includegraphics[width=\textwidth]{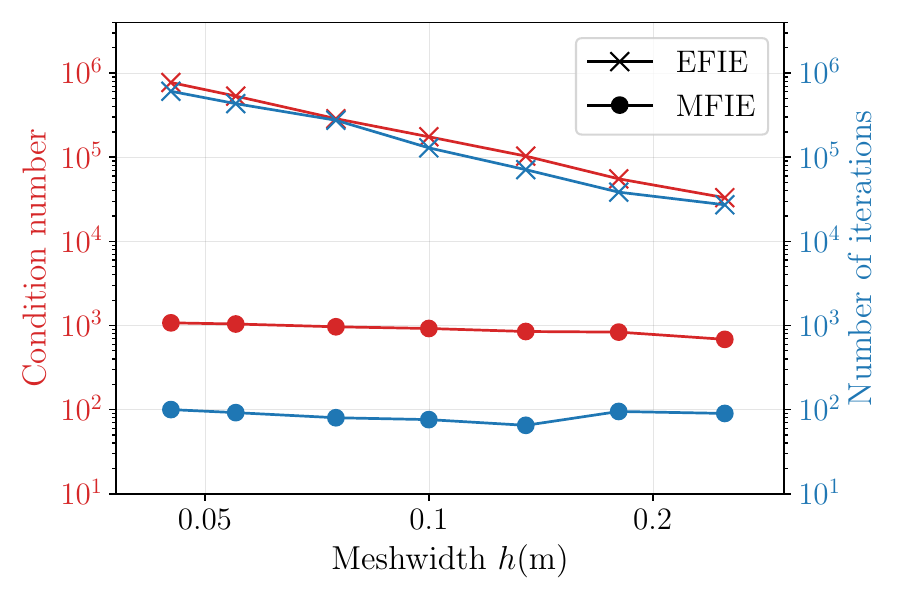}
    \end{subfigure}
    \caption{Condition number and corresponding number of GMRES iterations required to solve the linear systems produced by the Galerkin discretization of the EFIE and the MFIE. \textit{Left:} A cuboid of dimension $\mathrm{1m \times 1m \times 0.25m}$ with a concentric hole of dimension $\mathrm{0.5m \times 0.5m \times 0.25m}$. \textit{Right:} A pyramid of height $0.5 \mathrm{m}$ and 24-pointed star base, whose vertices lie on two circles of radius $1\mathrm{m}$ and $0.3\mathrm{m}$. Whereas the condition number and the corresponding GMRES iteration counts of the EFIE grow when the meshwidth $h$ decreases, those of the MFIE stay almost constant.}
    \label{fig:cond}
\end{figure}

\subsection{Star-based pyramid}
The second experiment concerns the scattering by a perfectly conducting pyramid with height $0.5\mathrm{m}$ and 24-pointed star base whose vertices lie on two circles of radius $1\mathrm{m}$ and $0.3\mathrm{m}$, see Figure~\ref{fig:domains} (\textit{right}).

On a polyhedral boundary $\Gm$, the regularity parameter $s^\ast$ can be determined by
\[
    s^\ast = \min_{v \in \Gm} \dfrac{2\pi}{\abs{\pa\omega_v}},
\]
where $\omega_v$ is the domain on the unit sphere cut out by the tangent cone to $\Gm$ at a vertex $v$ of $\Gm$ (see \cite[Theorem~8]{BCS2002a}). On the considered star-based pyramid, the boundary length of the curvilinear polygon $\omega_v$ reaches its maximum at the bottom vertex $v_0$ of the boundary $\Gm$, at which 48 edges meet. Particularly, we can find $s^\ast \approx 0.228$. Hence, according to Theorem~\ref{thm:convergence_rate}, only a sub-optimal convergence rate $\OO(h^{1.228})$ can be expected on this domain. Figure~\ref{fig:error} (\textit{right}) confirms our convergence analysis. Here, the reference solution $\ub_{\mathrm{ref}}$ is computed at the meshwidth $h = 0.02\mathrm{m}$. Finally, Figure~\ref{fig:cond} (\textit{right}) confirms again the well-conditioning property of the proposed Galerkin discretization scheme \eqref{eq:dis_vf} for the MFIE.

\section{Conclusions}
\label{sec:conclusions}

We have provided rigorous analysis of a Galerkin boundary element discretization of the magnetic field integral equation on Lipschitz polyhedra. The continuous variational problem has been shown to be uniquely solvable, provided that the wave number does not belong to the spectrum of the interior Maxwell's problem. A Petrov-Galerkin discretization of the variational formulation employing Raviart-Thomas boundary elements for the solution space and Buffa-Christiansen elements for the test space has been analyzed. Under a stability condition, the solvability and the well-conditioning of the discrete system have been proven, and the convergence rate of the discretization scheme has been investigated. A numerical scheme to verify the stability condition has been proposed, showing that this condition is unproblematic in practice. Some numerical results have also been presented, corroborating the theoretical analysis as well as the effect of the singularities of Maxwell's solutions on polyhedra to the convergence rate. The analyzed discretization scheme has been widely used in practical applications.

In future works, we shall investigate Galerkin discretization schemes for combined field integral equations and single source integral equations that employ the proposed discretization scheme for the double layer part.

\section*{Acknowledgments}

The authors would like to thank Dr. Ignace Bogaert for the many insightful comments and discussions.

\bibliographystyle{siamplain}
\bibliography{abrv_ref}
\end{document}